\def\blfootnote{\gdef\@thefnmark{}\@footnotetext}
\theoremstyle{plain}
\newtheorem*{theorem*}{Theorem}
\newtheorem{theorem}{Theorem}[section]
\newtheorem{lemma}[theorem]{Lemma}
\theoremstyle{remark}
\theoremstyle{Acknowledgments}
\newtheorem*{main}{Main Theorem}
\theoremstyle{definition}
 \def\R{{\mathbb{R}}}
 \def\Z{{\mathbb{Z}}}
 \def\N{{\mathbb{N}}}
\def\mod{{\rm Mod}}
\def\Pm{{\rm PMod}}
\def\Ends{{\rm Ends}}
 \def\Sym{{\rm Sym}}
\begin{document}
\blfootnote{\textup{2000} \textit{Mathematics Subject Classification}:
57M07, 20F05, 20F38}
\blfootnote{\textit{Keywords}:
Big mapping class groups, infinite surfaces, involutions, generating sets}
\newenvironment{prooff}{\medskip \par \noindent {\it Proof}\ }{\hfill
$\square$ \medskip \par}
    \def\sqr#1#2{{\vcenter{\hrule height.#2pt
        \hbox{\vrule width.#2pt height#1pt \kern#1pt
            \vrule width.#2pt}\hrule height.#2pt}}}
    \def\square{\mathchoice\sqr67\sqr67\sqr{2.1}6\sqr{1.5}6}
\def\pf#1{\medskip \par \noindent {\it #1.}\ }
\def\endpf{\hfill $\square$ \medskip \par}
\def\demo#1{\medskip \par \noindent {\it #1.}\ }
\def\enddemo{\medskip \par}
\def\qed{~\hfill$\square$}

 \title[Involution Generators of the Big Mapping Class Group] {Involution Generators of the Big Mapping Class Group}

\author[T{\"{u}}l\.{i}n Altun{\"{o}}z,       Mehmetc\.{i}k Pamuk, and O\u{g}uz Y{\i}ld{\i}z ]{T{\"{u}}l\.{i}n Altun{\"{o}}z,    Mehmetc\.{i}k Pamuk, and O\u{g}uz Y{\i}ld{\i}z}
\address{Faculty of Engineering, Ba\c{s}kent University, Ankara, Turkey} 
\email{tulinaltunoz@baskent.edu.tr} 
\address{Department of Mathematics, Middle East Technical University,
 Ankara, Turkey}
 \email{mpamuk@metu.edu.tr}
 \address{Department of Mathematics, Middle East Technical University,
 Ankara, Turkey}
  \email{oguzyildiz16@gmail.com}

%\subjclass{57M99, 20F38}
%\date{\today}
%\keywords{Mapping class groups, Lefschetz fibrations, Bounded cohomology}
%\thanks{* Supported by T\"UBA/GEB\.IP}

\begin{abstract}  
Let $S=S(n)$ denote the infinite surface with $n$ ends, $n\in \mathbb{N}$,  accumulated by genus. For $n\geq6$, we show that the mapping class group of $S$ is topologically generated by five involutions. When $n\geq3$, it is topologically  generated by six involutions.  
\end{abstract}
\maketitle
%\tableofcontents
  \setcounter{secnumdepth}{2}
 \setcounter{section}{0}
 
\section{Introduction}
Let $S$ be a second countable, connected, orientable surface with compact (possibly empty) boundary.  We say that $S$ is of \textit{finite type} if its fundamental group is finitely generated and \textit{infinite type} otherwise.  The \textit{mapping class group} of $S$, denoted by $\mod(S)$, is defined as the group of isotopy classes of orientation preserving self-homeomorphisms of $S$.  

Let us first consider finite type surfaces. The mapping class group of a surface of finite type has been  studied in details for many years.  Many  sets of generators are known.   It is now a classical result that $\mod(S)$ is generated by finitely many Dehn twists about non-separating simple closed curves~\cite{de,H,l3}. The study of algebraic properties of mapping class group, finding small generating sets, generating 
sets with particular properties, has been an active one leading to interesting developments. Wajnryb~\cite{w} showed that $\mod(S)$ can be generated by two elements given as a product of Dehn twists. 
As the group is not abelian, this is the smallest possible. Later, Korkmaz~\cite{mk2} showed that one of these generators can be taken as a Dehn twist, he also proved that $\mod(S)$ can be generated 
by two torsion elements. The third author showed that $\mod(S)$ is generated by two torsion elements of small orders~\cite{y1}.

 Let $g$ denote the genus of $S$.  Generating $\mod(S)$ by involutions was first considered by McCarthy and Papadopoulus~\cite{mp}.  They showed that for $g \geq3$,  $\mod(S)$  can be generated by infinitely many conjugates of a single involution.  In terms of generating by finitely many involutions,
 Luo~\cite{luo} showed that any Dehn twist about a non-separating simple closed curve can be written as a product six involutions, which in turn implies that $\mod(S)$ can be generated 
 by $12g + 6$ involutions. Brendle and Farb~\cite{bf} obtained a generating set of six involutions for $g \geq 3$. Following their work, Kassabov~\cite
{ka} showed that $\mod(S)$ can be generated 
 by four involutions if $g \geq 7$. Korkmaz~\cite{mk1} showed that $\mod(S)$ is generated by three involutions if $g \geq8$ and four involutions if $g \geq 3$. Also, the third author improved 
 his result showing that it is generated by three involutions if $g \geq 6$~\cite{y2}.

Infinite-type surfaces and their mapping class groups, also called big mapping class groups, have generated great interest in the last several years.  These big mapping class groups can be seen as limit objects of the mapping class groups of finite type surfaces.  While work has been done by many authors to show that mapping class groups of finite-type surfaces are generated by torsion elements, not much has been done for the infinite-type case.  The goal of this note is to investigate involution generators for big mapping class groups.

It is now well-known that the homeomorphism type of a finite-type surface is determined by the triple $(g, p, b)$, where $g \geq 0$ is the genus, and $p\geq 0$ is the number of punctures and $b\geq 0$ is the number of boundary components of the surface.  To give a similar classification result for  infinite type surfaces we should first define the space of ends of a surface.

An \textit{end} of a surface $S$ is the equivalence class of a nested sequence of connected subsurfaces $U_1\supset U_2\supset \ldots$ of $S$ with compact boundary and with the property that for any compact subsurface $K \subset S$, $K \cap U_r = \emptyset$ for high enough $r$. Two such sequences $U_1 \supset U_2 \supset \ldots$ and $V_1 \supset V_2 \supset \ldots$ are equivalent if for every $r\in \N$ there exists $s \in \N$ such that $V_s \subset U_r$ and vice versa.
An end given by a sequence $U_1\supset U_2\supset \ldots$ is said to be \textit{accumulated by genus (nonplanar)} if every $U_r$ has positive genus. Otherwise, it is said to be a \textit{planar end}.

\textit{The space of ends} of $S$, denoted by $\Ends(S)$, is a topological space whose points are the ends of $S$ and whose basic open sets correspond to subsurfaces 
$U \subset S$ with compact boundary. An end $U_1\supset U_2\supset \ldots$ of $S$ is contained in the basic open set corresponding to $U$ if $U_n \subset S$ for high enough $n$.  By construction, $\Ends(S)$ is compact, separable, and totally disconnected—in other words, it is homeomorphic to a closed subspace of a Cantor space. The set of ends accumulated by genus is denoted by $\Ends_{\infty}(S)$ and is always a closed subspace of $\Ends(S)$.  By work of Richards~\cite{ric}, an orientable,
boundaryless, infinite-type surface, $S$, is completely classified by its (possibly infinite) genus,
its space of ends, $\Ends(S)$, and the closed subset of ends which are accumulated by genus, $\Ends_{\infty}(S)$.

Throughout the paper, we consider surfaces with infinite genus and $n$ ends, $n\in \mathbb{N}$, and all ends are accumulated by genus. Let us denote such a surface by $S(n)$.  The \textit{pure mapping class group}, denoted by  $\Pm(S(n))$, is the subgroup of $\mod(S(n))$ fixing $\Ends(S)$ pointwise.  For the involution generators of big mapping class groups of some other infinite type surfaces we refer the reader to \cite[Theorem 1.2, Theorem 1.3]{ah} where the author obtained generating sets with the minimum possible number of generators.     

In the case of surfaces of infinite type, $\mod(S(n))$ is not countably generated.  On the other hand, being a quotient of the group of orientation self-homeomorphism of $S(n)$ (that is equipped with the compact open topology),  $\mod(S(n))$ inherits a topology. This makes $\mod(S(n))$ a Polish group \cite[Proposition 4.1]{av}, in particular $\mod(S(n))$ is separable.  Hence, $\mod(S(n))$ is topologically generated by a countable set i.e., there is a countable set that generates a dense subgroup (see \cite{av} for more details).  

%the (pure) mapping class group is uncountable, so in particular it is %not finitely (nor countably) generated.  Note that the (pure) mapping %class group of a surface of infinite type is endowed with an %interesting topology, induced by the compact-open topology on the group %of homeomorphisms of the surface. So it is interesting to talk about %topological generating sets (sets whose closure of the group they %generate is the (pure) mapping class group). It follows from the finite-%type results that Dehn twists topologically generate the closure of the %compactly supported mapping class group (see \cite{pv} for more %details).

The pure mapping class group $\Pm(S(n))$ is a normal subgroup of $\mod(S(n))$ of index $n!$. For $n\geq 2$, we have the following exact sequence:
 \[
1\longrightarrow \Pm(S(n))\longrightarrow \mod(S(n)) \longrightarrow \Sym_{n}\longrightarrow 1,
\]
where $\Sym_n$ is the symmetric group on $n$ letters and the last projection is given by the restriction of the isotopy class of a diffeomorphism to its action on ends.

Results on mapping class groups of finite-type surfaces do not immediately  extend for infinite-type surfaces. However, when it comes to generating the mapping class group, Patel and Vlamis \cite[Theorem 4]{pv} showed that the pure mapping class group of a surface is topologically generated by Dehn twists if the surface has at most one end accumulated by genus, and by Dehn twists and maps called \textit{handle shifts} otherwise. 

Now, let us immediately define handle shifts.  Consider the surface $S$ obtained by taking $\R\times [0,1]$, removing the interior of each  disk of radius $\frac{1}{4}$ for each $n\in \Z$, and gluing a torus with one boundary component to the boundary of each disk. Define a homeomorphism $\sigma$ that acts like $(x,y)\mapsto (x+1,y)$ on the interior of $S$ and extends as the identity homeomorphism in a neighbourhood of $\partial S$. Roughly speaking, the homeomorphism $\sigma$ slides the $n^{\textit{th}}$- handle (in the disk centered at $(n,\frac{1}{2})$) horizontally until it comes to the position of $(n+1)^{\textit{th}}$- handle (in the disk centered at $(n+1,\frac{1}{2})$) (see Figure~\ref{H}).

\begin{figure}[hbt!]
\begin{center}
\scalebox{0.28}{\includegraphics{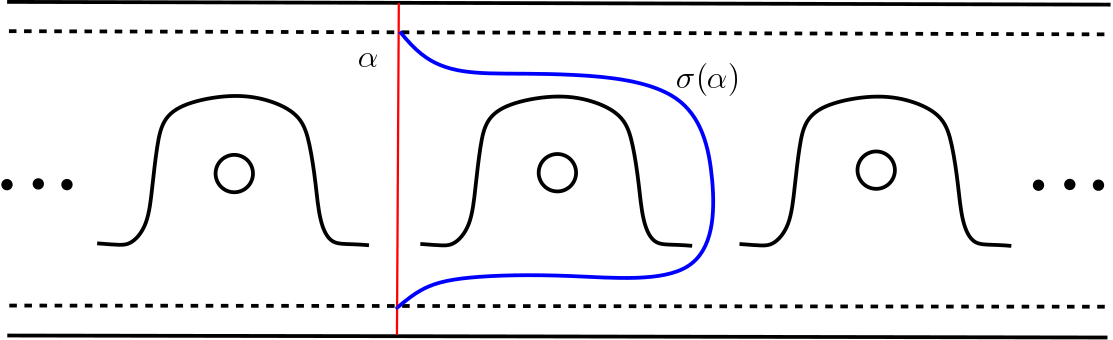}}
\caption{The homeomorphism $\sigma$ on the surface$S$.}
\label{H}
\end{center}
\end{figure}

Before we state the main theorem of the paper, we want to note here that by the results of Malestein and Tao\cite[Theorem A]{malestein-tao} for a uniformly self-similar surface (surface that has a self-similar ends space with infinitely many maximal ends and zero or infinite genus) $\mod(S)$ is generated by involutions, normally generated by a single involution. Also in \cite{domat}, the authors prove that the closure of the compactly supported mapping class group of an infinite-type surface is not generated by the collection of multitwists.

In this paper, we obtain the following main result:
\begin{main}\label{main}
For $n\geq 6$, $\mod(S(n))$ is topologically generated by five involutions and for $n\geq 3$, it is topologically generated by six involutions.
\end{main}

 Before we finish this section, let us fix our notation. Throughout the paper we do not distinguish a 
 diffeomorphism from its isotopy class. For the composition of two diffeomorphisms, we
use the functional notation; if $f$ and $g$ are two diffeomorphisms, then
the composition $fg$ means that $g$ is applied first and then $f$.\\
\indent
 For a simple closed curve $a$ on $\mod(S(n))$, we denote the right-handed Dehn twist $t_a$ about $a$ by the corresponding capital letter $A$.  We  denote inverse of any mapping class $X$ by $\overline{X}$.  

Finally, let us recall the following basic facts of Dehn twists that we use frequently in the rest of the paper. Let $a$ and $b$ be two
simple closed curves on $S(n)$ and $f\in \mod(S(n))$.
\begin{itemize}
\item  If $a$ and $b$ are disjoint, then $AB=BA$ (\textit{Commutativity}).
\item If $f(a)=b$, then $fA \overline{f}=B$.  (\textit{Conjugation}).
\end{itemize}

\noindent
 
%The paper is organized as follows: In Section~\ref{S2}, we present %the necessary background and some results on mapping class groups. 
%The proof of our main theorem is given in Section~\ref{S3}.
\medskip

\noindent
{\bf Acknowledgements.}
This work is supported by the Scientific and Technological Research Council of Turkey (T\"{U}B\.{I}TAK)[grant number 120F118].

%\section{A generating set for %$\mod(S(n))$}\label{S3}

\section{Proof of the Main Theorem}\label{S3}
Let us start with reminding  the following basic fact from group theory.
\begin{lemma}\label{lemma1}
Let $G$ and $K$ be groups. Suppose that the following short exact sequence holds,
\[
1 \longrightarrow N \overset{i}{\longrightarrow}G \overset{\pi}{\longrightarrow} K\longrightarrow 1.
\]
Then a subgroup $\Gamma$ of $G$ satisfies $i(N)\subseteq \Gamma$ and $\pi(\Gamma)=K$ if and only if $\Gamma=G$.
\end{lemma}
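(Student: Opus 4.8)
The plan is to prove this by an elementary diagram chase, the only input being exactness of the sequence at $G$, namely that $\ker \pi = i(N)$. The statement is an equivalence, so I would treat the two implications separately, and I expect essentially all of the (very modest) content to sit in one of them. The ``only if'' direction is immediate: if $\Gamma = G$, then $i(N)\subseteq G = \Gamma$ because $i(N)$ is a subgroup of $G$, and $\pi(\Gamma)=\pi(G)=K$ because $\pi$ is surjective by exactness at $K$. So no real work is needed there.

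For the ``if'' direction, I would assume $i(N)\subseteq \Gamma$ and $\pi(\Gamma)=K$ and show that every $g\in G$ lies in $\Gamma$. Given $g\in G$, consider $\pi(g)\in K$. Since $\pi(\Gamma)=K$, I can choose $\gamma\in \Gamma$ with $\pi(\gamma)=\pi(g)$. Then $\pi(g\gamma^{-1})=\pi(g)\pi(\gamma)^{-1}=e_K$, so $g\gamma^{-1}\in \ker\pi$, and by exactness at $G$ this means $g\gamma^{-1}\in i(N)$. Because $i(N)\subseteq \Gamma$, we get $g\gamma^{-1}\in \Gamma$, and hence $g=(g\gamma^{-1})\gamma$ is a product of two elements of $\Gamma$, so $g\in \Gamma$. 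As $g$ was arbitrary, $G\subseteq \Gamma$; combined with $\Gamma\subseteq G$ this yields $\Gamma=G$.

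The only steps that require any attention—and they are routine—are invoking exactness at $G$ to identify $\ker\pi$ with $i(N)$, and using that $\Gamma$, being a subgroup, is closed under products and inverses so that $(g\gamma^{-1})\gamma\in\Gamma$. There is no genuine obstacle: the lemma is just the standard fact that a subgroup which contains the kernel and surjects onto the quotient must be the whole group, and its role in the paper is purely organizational, reducing the task of generating $\mod(S(n))$ to generating the pure mapping class group $\Pm(S(n))$ together with realizing every permutation in $\Sym_n$.
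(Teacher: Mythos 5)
Your proof is correct, and it is the standard argument one would expect. The paper actually states Lemma~\ref{lemma1} without any proof, treating it as a basic fact from group theory, so there is nothing to compare against; your diagram chase (representing $\pi(g)$ by some $\gamma\in\Gamma$, observing $g\gamma^{-1}\in\ker\pi=i(N)\subseteq\Gamma$, and concluding $g\in\Gamma$) supplies exactly the routine verification the authors omitted, and you also correctly identify the lemma's organizational role in reducing the main theorem to generating $\Pm(S(n))$ and surjecting onto $\Sym_n$.
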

\par

In our case where $G=\mod(S(n))$ and $N=\Pm(S(n))$, the following short exact sequence holds for $n\geq 2$:
\[
1\longrightarrow \Pm(S(n)) \overset{i} \longrightarrow \mod(S(n)) \overset{\pi}\longrightarrow \Sym_{n}\longrightarrow 1.
\]
%where $S_p$ denotes the symmetric group on the set $\lbrace1,2,\ldots,p\rbrace$. 
Hence, we get the following useful result which follows immediately from Lemma~\ref{lemma1}: If $\Gamma$ is a subgroup of $\mod(S(n))$ with $\Pm(S(n))\subseteq \Gamma$ and $\pi(\Gamma)=\Sym_n$, then $\Gamma=\mod(S(n))$.

%%%%%%%%%%%%%%%%%%

\begin{figure}[hbt!]
\begin{center}
\scalebox{0.4}{\includegraphics{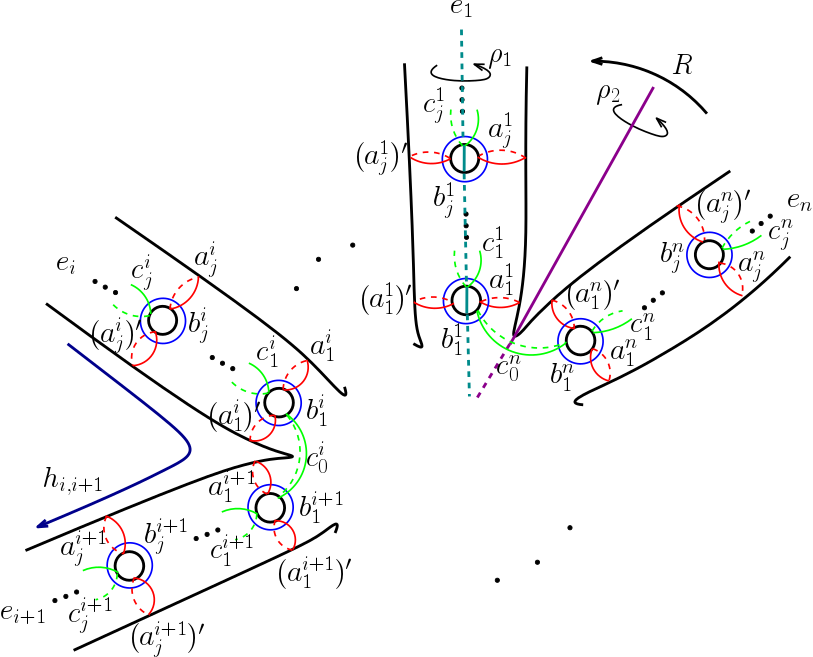}}
\caption{The curves $a_j^{i},b_j^{i},c_j^{i},$ and $c_0^{i} $, the rotations $\rho_1$, $\rho_2$, $R$ and the handle shift $h_{i,i+1}$ for $i=1,2,\ldots,n$ on $S(n)$.}
\label{G}
\end{center}
\end{figure}

Now, consider the model for $S(n)$ depicted in Figure~\ref{G}. If $n\geq 2$, there is a handle shift $h_{i,i+1}$, whose action can be described as
\[
 \begin{array}{lcr}
  \vspace*{0.2cm}
 h_{i,i+1}(b_1^{i})=b_1^{i+1},   & \hspace*{2cm} 
 h_{i,i+1}(a_1^{i})=(a_1^{i+1})',   & \hspace*{2cm} h_{i,i+1}(c_0^{i})=c_1^{i+1},  \\
 \vspace*{0.2cm}
 h_{i,i+1}(b_{j\neq 1}^{i})=b_{j-1}^{i},   & \hspace*{2cm} 
 h_{i,i+1}(a_{j\neq 1}^{i})=a_{j-1}^{i},  & \hspace*{2cm} h_{i,i+1}(c_{j\neq 0}^{i})=c_{j-1}^{i},\\
h_{i,i+1}(b_j^{i+1})=b_{j+1}^{i+1},   & \hspace*{2cm} 
 h_{i,i+1}(a_j^{i+1})=a_{j+1}^{i+1},   & \hspace*{2cm} h_{i,i+1}(c_j^{i+1})=c_{j+1}^{i+1}.
 \end{array} 
\]
Note that the surface $S(n)$ is invariant under the rotations $\rho_1$ and $\rho_2$ which are the $\pi$ rotations about the indicated lines shown in Figure~\ref{G}. Moreover, the homeomorphism $R=\rho_1\rho_2$ is the rotation by $\frac{2\pi}{n}$, which satisfies 
\[
R(\alpha^i)=\alpha^{i+1}, \textrm{ where } \alpha \in \lbrace a_k,b_k,c_{k-1} \rbrace \textrm{ for } k={1,2,\ldots }
\]

\begin{figure}[hbt!]
\begin{center}
\scalebox{0.35}{\includegraphics{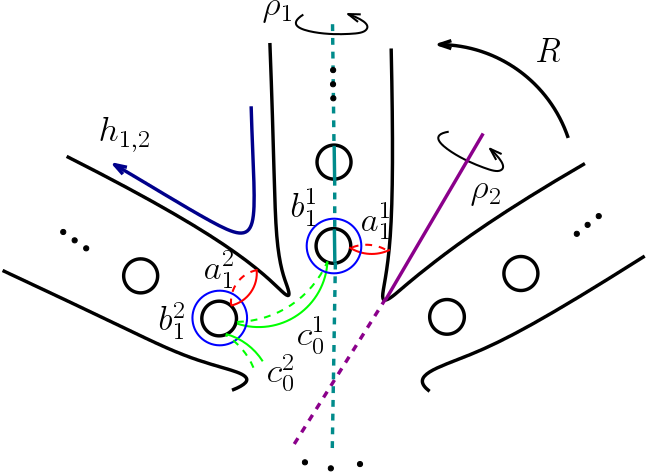}}
\caption{The curves $a_1^{1}, a_1^{2},b_1^{1}, b_1^{2},c_0^{1}$ and $c_0^{2} $ on $S(n)$.}
\label{lantern}
\end{center}
\end{figure}

\begin{lemma}\label{lem33}
For $n\geq3$, the group topologically generated by the elements
\[
\lbrace \rho_1, \rho_2, A_1^{1}\overline{A_1^{2}},B_1^{1}\overline{B_1^{2}}, C_0^{1}\overline{C_0^{2}}, h_{1,2}\rbrace
\]
contains the Dehn twists $A_1^{2}\overline{A_2^{2}}$, $B_1^{2}\overline{B_2^{2}}$ and $C_1^{2}\overline{C_2^{2}}$.
\end{lemma}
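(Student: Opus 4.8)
The plan is to obtain all three target elements as explicit finite words in the given generators, so that topological generation is not even needed at this step. The only tool is the conjugation rule from the introduction: if a homeomorphism $f$ carries a curve $\gamma$ to a curve $\delta$, then $f$ conjugates the twist about $\gamma$ to the twist about $\delta$, and since conjugation distributes over products, $f(A_1^{1}\overline{A_1^{2}})\overline{f}$ equals the twist about $f(a_1^{1})$ times the inverse twist about $f(a_1^{2})$, and likewise for the $b$- and $c$-curves. Every computation therefore reduces to reading off the tabulated action of the handle shift $h:=h_{1,2}$ on the relevant curves; the rotations $\rho_1,\rho_2,R$ turn out not to be needed for this particular containment.

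For the $b$-family the answer is immediate: from $h(b_1^{1})=b_1^{2}$ and $h(b_1^{2})=b_2^{2}$ we get $h(B_1^{1}\overline{B_1^{2}})\overline{h}=B_1^{2}\overline{B_2^{2}}$. For the $c$-family one extra step is required, because the innermost curve $c_0^{2}$ behaves differently from $c_0^{1}$: the tabulated action sends $c_0^{1}$ out to $c_1^{2}$ and shifts $c_j^{2}\mapsto c_{j+1}^{2}$ for $j\geq 1$, and since $h$ permutes the $c$-curves with every other image accounted for, $c_0^{2}$ must be fixed. Conjugating $C_0^{1}\overline{C_0^{2}}$ by $h$ then gives $C_1^{2}\overline{C_0^{2}}$, and conjugating this again gives $C_2^{2}\overline{C_0^{2}}$; both lie in the group, so their product $(C_1^{2}\overline{C_0^{2}})(C_0^{2}\overline{C_2^{2}})=C_1^{2}\overline{C_2^{2}}$ telescopes to the desired element, where $C_0^{2}\overline{C_2^{2}}$ is the inverse of $C_2^{2}\overline{C_0^{2}}$.

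The $a$-family is where the genuine difficulty lies, and I expect it to be the main obstacle. Conjugating $A_1^{1}\overline{A_1^{2}}$ by $h$ produces the twist about $(a_1^{2})':=h(a_1^{1})$ times the inverse twist about $a_2^{2}$, that is $(A_1^{2})'\overline{A_2^{2}}$. The handle shift records its image of $a_1^{1}$ with a prime precisely because, in dragging this curve around the central region into the second arm, it does not return the chosen representative to the standard curve $a_1^{2}$ on the nose. The crux is therefore to verify that $(a_1^{2})'$ and $a_1^{2}$ are isotopic in $S(n)$, which I expect to hold: the slid curve is a meridian of the same handle and hence lies in the same unoriented isotopy class, so the two Dehn twists coincide and the conjugate is exactly $A_1^{2}\overline{A_2^{2}}$. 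Should the two curves instead differ as isotopy classes, the fallback is to express $(a_1^{2})'$ in terms of $a_1^{2}$ and curves whose difference-twists have already been placed in the group (the $b$- and $c$-families above) and to strip off the resulting correction by a further conjugation; but I anticipate the direct isotopy argument suffices.
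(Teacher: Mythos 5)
Your handling of the $b$-family is correct and is exactly the paper's step: $h_{1,2}(b_1^1,b_1^2)=(b_1^2,b_2^2)$ yields $B_1^2\overline{B_2^2}$ in one conjugation. The other two families, however, each rest on a false premise, and these are precisely the two points the paper's proof is engineered to circumvent. For the $c$-family you claim $h_{1,2}(c_0^2)=c_0^2$ on the grounds that ``$h$ permutes the $c$-curves with every other image accounted for.'' A homeomorphism carries the labelled curves to simple closed curves, not necessarily to other members of the labelled family, so no such permutation argument is available: the table simply does not record $h_{1,2}(c_0^2)$ (reading the third row with $j=0$ would force $h_{1,2}$ to send both $c_0^1$ and $c_0^2$ to $c_1^2$, which is impossible). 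Geometrically, $c_0^2$ runs past the first handle of the second arm, which $h_{1,2}$ displaces to the second position, so $h_{1,2}(c_0^2)$ is a curve outside the labelled family and in particular is not $c_0^2$. The paper's proof avoids conjugating anything containing $C_0^2$ by $h_{1,2}$: it first produces $C_0^2\overline{C_0^3}$ by conjugating with $R=\rho_1\rho_2$, telescopes to $C_0^1\overline{C_0^3}$, and only then applies $h_{1,2}$, whose effect on $c_0^1$ and $c_0^3$ is known --- and even then the case $n=3$ requires a separate correction because $c_0^3$ is also moved when arm $3$ is adjacent to arm $1$.

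For the $a$-family, the crux you isolate --- whether $(a_1^2)'=h_{1,2}(a_1^1)$ is isotopic to $a_1^2$ --- is resolved in the wrong direction. ``Meridian of the same handle'' does not determine an isotopy class, since a one-holed torus contains infinitely many pairwise non-isotopic nonseparating simple closed curves; the primed notation exists exactly because $(a_1^i)'$ and $a_1^i$ are distinct classes (otherwise Lemma~\ref{lem6}, most of whose proof is devoted to converting $A_1^1\overline{A_1^{2'}}$ into $A_1^1\overline{A_1^{2}}$, would be vacuous, as would the distinction $\rho_1(a_1^1)=(a_1^1)'$ used throughout). Your fallback of stripping off a correction term is the right instinct but is not carried out. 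The paper's actual route is to conjugate $A_1^1\overline{A_1^2}$ first by $\rho_1$ to obtain $(A_1^1)'\overline{(A_1^n)'}$, then use $h_{1,2}\bigl((a_1^1)'\bigr)=a_1^2$ and the fact that $(a_1^n)'$ is fixed to produce $A_1^2\overline{(A_1^n)'}$ and $A_2^2\overline{(A_1^n)'}$, whose quotient is $A_1^2\overline{A_2^2}$ with the primed twist cancelling. So, contrary to your remark that the rotations are not needed, both $\rho_1$ and $R$ are essential, and two of the three containments remain unproved as written.
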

\begin{proof}
Let $G$ denote the subgroup topologically generated by the elements 
\[
\lbrace \rho_1, \rho_2, A_1^{1}\overline{A_1^{2}},B_1^{1}\overline{B_1^{2}}, C_0^{1}\overline{C_0^{2}}, h_{1,2}\rbrace.
\]
Since the rotation $\rho_1$ sends the curves $(a_1^{1},a_1^{2})$ to the curves  $((a_1^{1})',(a_1^{n})')$, we have
\[
(A_1^{1})'\overline{(A_1^{n})'}=(A_1^{1}\overline{A_1^{2}})^{\rho_1}\in G.
\]
Then it follows from $h_{1,2}((a_1^{1})',(a_1^{n})')=(a_1^{2},(a_1^{n})')$ that we have
\[
A_1^{2}\overline{(A_1^{n})'}=\big((A_1^{1})'\overline{(A_1^{n})'}\big)^{h_{1,2}}\in G.
\]
Moreover, since $h_{1,2}(a_1^{2},(a_1^{n})')=(a_2^{2},(a_1^{n})')$, we obtain
\[
A_2^{2}\overline{(A_1^{n})'}=\big(A_1^{2}\overline{(A_1^{n})'}\big)^{h_{1,2}}\in G.
\]
Hence, we have
\[
A_1^{2}\overline{A_2^{2}}=\big(A_1^{2}\overline{(A_1^{n})'}\big)\big((A_1^{n})'\overline{ A_2^{2}}\big)\in G.
\]
It follows from $h_{1,2}(b_1^{1},b_1^{2})=(b_1^{2},b_2^{2})$ that we get
\[
B_1^{2}\overline{B_2^{2}}=(B_1^{1}\overline{B_1^{2}})^{h_{1,2}}\in G.
\]
Note that the rotation $R=\rho_1\rho_2\in G$. Since $R(c_0^{1},c_0^{2})=(c_0^{2},c_0^{3})$, the following element
\[
C_0^{2}\overline{C_0^{3}}=(C_0^{1}\overline{C_0^{2}})^{R}\in G
\]
From this, we have
\[
C_0^{1}\overline{C_0^{3}}=(C_0^{1}\overline{C_0^{2}})(C_0^{2}\overline{C_0^{3}})\in G.
\]
Moreover, since
\begin{eqnarray*}
h_{1,2}(c_0^{1},c_0^{3})&=&(c_1^{2},c_0^{3})
 \textrm{ if } n\neq 3\\
\big(h_{1,2}(c_0^{1},c_0^{3})&=&(c_1^{2},c_0^{2})
 \textrm{ if } n=3\big),
  \end{eqnarray*}
we obtain that the element
\begin{eqnarray*}
C_1^{2}\overline{C_0^{3}}&=&(C_0^{1}\overline{C_0^{3}})^{h_{1,2}}\in G \textrm{ if } n\neq 3\\ 
\big( C_1^{2}\overline{C_0^{3}}&=&(C_0^{1}\overline{C_0^{3}})^{h_{1,2}}(C_0^{2}\overline{C_0^{3}})\in G \textrm{ if } n=3 \big)
\end{eqnarray*}
Then the element
\[
C_0^{1}\overline{C_1^{2}}=(C_0^{1}\overline{C_0^{3}})(C_0^{3}\overline{C_1^{2}})\in G.
\]
Finally, since $h_{1,2}(c_0^{1},c_1^{2})=(c_1^{2},c_2^{2})$, we have
\[
C_1^{2}\overline{C_2^{2}}=(C_0^{1}\overline{C_1^{2}})^{h_{1,2}}\in G.
\]
This completes the proof.
\end{proof}

\begin{figure}[hbt!]
\begin{center}
\scalebox{0.35}{\includegraphics{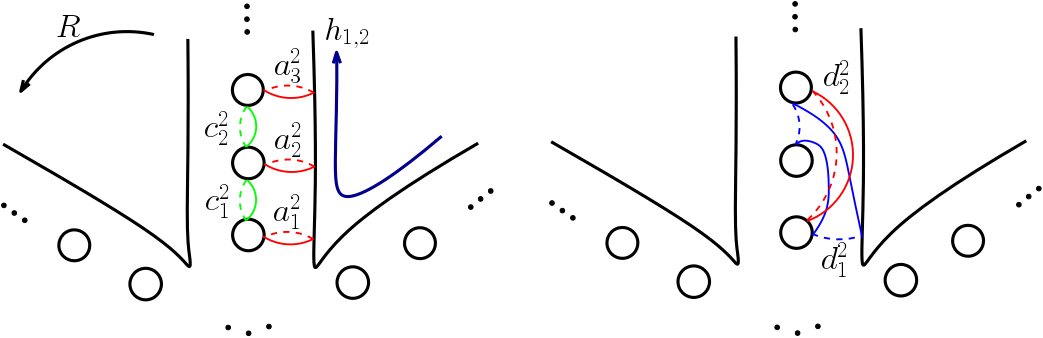}}
\caption{The curves of the embedded lantern relation $A_1^{2}C_1^{2}C_2^{2}A_3^{2}=A_2^{2}D_1^{2}D_2^{2}$ on $S(n)$.}
\label{lantern}
\end{center}
\end{figure}
\begin{lemma}\label{lem44}
For $n\geq3$, the group topologically generated by the elements
\[
\lbrace \rho_1, \rho_2, A_1^{2}\overline{A_2^{2}},B_1^{2}\overline{B_2^{2}}, C_1^{2}\overline{C_2^{2}}, h_{1,2}\rbrace
\]
%contains the Dehn twists $A_i^{j}$, $B_i^{j}$, $C_i^{j}$ and $C_0^{j}$ for all $j=1,2,\ldots, n$.
contains the Dehn twists $A_i^{j}$, $B_i^{j}$, $C_{i-1}^{j}$ for all $j=1,2,\ldots, n$ and for all $i=1, 2, 3, \ldots$

\end{lemma}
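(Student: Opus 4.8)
The plan is to show that the group $G$ topologically generated by the six listed elements contains every individual Dehn twist $A_i^{j}, B_i^{j}, C_{i-1}^{j}$ by a three-stage bootstrap: first produce all ``consecutive differences'' of twists of each type, then use the lantern relation to upgrade a difference to a single genuine twist, and finally spread that twist over all handles and all ends using the available symmetries. To begin, I would propagate the three difference generators. Since the handle shift $h_{1,2}$ advances the index $i$ to $i+1$ on the second end (acting by $a_i^{2}\mapsto a_{i+1}^{2}$, $b_i^{2}\mapsto b_{i+1}^{2}$, $c_i^{2}\mapsto c_{i+1}^{2}$ for $i\ge 1$), conjugating $A_1^{2}\overline{A_2^{2}}$, $B_1^{2}\overline{B_2^{2}}$, $C_1^{2}\overline{C_2^{2}}$ by powers of $h_{1,2}$ yields $A_i^{2}\overline{A_{i+1}^{2}}$, $B_i^{2}\overline{B_{i+1}^{2}}$, $C_{i-1}^{2}\overline{C_i^{2}}$ for all $i\ge 1$. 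Multiplying consecutive such elements telescopes to give every difference $A_i^{2}\overline{A_k^{2}}$ (and likewise for $B$ and $C$), and conjugating all of these by the rotation $R=\rho_1\rho_2$, which advances the end index by one, spreads them across all ends $j=1,\dots,n$. At this point $G$ contains every difference of two twists of the same type about curves lying on a common end.

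The crux is to pass from a \emph{difference} of twists to a \emph{single} twist, and for this I would invoke the embedded lantern relation
\[
A_1^{2}C_1^{2}C_2^{2}A_3^{2}=A_2^{2}D_1^{2}D_2^{2}
\]
of Figure~\ref{lantern}. The boundary curves $a_1^{2},c_1^{2},c_2^{2},a_3^{2}$ are disjoint, so their twists commute, and solving the relation for one of them, say $A_3^{2}$, expresses this single twist as an ordered product of the six twists $A_1^{2},A_2^{2},C_1^{2},C_2^{2},D_1^{2},D_2^{2}$. Using the commutation relations among these curves, I would regroup that product into pairs, each of which is a difference already known to lie in $G$: the $A$-factors combine into an $A$-difference, while the $C$- and $D$-factors combine into ``mixed'' differences of the form $C_k^{2}\overline{D_k^{2}}$. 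To certify these mixed differences, I would conjugate the lantern relation and its handle-shifted and rotated copies to exhibit elements of $G$ carrying the interior curves $d_1^{2},d_2^{2}$ onto $c$- and $b$-curves, thereby producing the auxiliary differences needed. Once every factor lies in $G$, we conclude $A_3^{2}\in G$; regrouping the same relation to isolate a boundary $c$-curve yields a single $C$-twist, and the interior $d$-curves (which are $b$-type up to an element of $G$) yield a single $B$-twist.

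Finally, once a single twist of each of the three types lies in $G$, I would recover all the others. From $A_3^{2}\in G$ and the differences $A_i^{2}\overline{A_3^{2}}\in G$, the identity $A_i^{2}=(A_i^{2}\overline{A_3^{2}})A_3^{2}$ gives $A_i^{2}\in G$ for every $i$, and conjugating by $R$ distributes these over all ends; the identical argument applies to the $B$- and $C$-twists, yielding $A_i^{j},B_i^{j},C_{i-1}^{j}\in G$ for all $i\ge 1$ and $j=1,\dots,n$. I expect the genuine obstacle to be the middle stage: the lantern relation directly involves only the $a$- and $c$-families together with the auxiliary curves $d_1^{2},d_2^{2}$, so the delicate point is to verify that these interior $d$-curves are carried onto $b$- and $c$-curves by elements already in $G$. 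This is what guarantees both that the regrouped factors are honest differences in $G$ and that a single twist can be extracted from \emph{each} of the three families rather than only from the $a$- and $c$-families, and it is where the rotations and handle shift must be used most carefully.
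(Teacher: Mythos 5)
Your overall strategy coincides with the paper's: propagate the three ``difference'' generators by $h_{1,2}$ and $R=\rho_1\rho_2$, use the embedded lantern relation $A_1^{2}C_1^{2}C_2^{2}A_3^{2}=A_2^{2}D_1^{2}D_2^{2}$ to extract a single twist from a product of differences, and then distribute that twist by the symmetries. However, there is a genuine gap exactly at the point you yourself flag as ``the genuine obstacle'': you never actually produce the mixed differences needed to regroup the lantern relation into elements of $G$. After stage one you only possess same-type, same-end differences such as $A_i^{2}\overline{A_k^{2}}$; but rewriting $A_3^{2}=(A_2^{2}\overline{C_2^{2}})(D_1^{2}\overline{A_1^{2}})(D_2^{2}\overline{C_1^{2}})$ requires the cross-type elements $A_2^{2}\overline{C_2^{2}}$, $D_1^{2}\overline{A_1^{2}}$, $D_2^{2}\overline{C_1^{2}}$. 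The paper spends most of its proof constructing these: it first checks that explicit products of the known differences (e.g.\ $(A_1\overline{A_2})(B_1\overline{B_2})$) carry $a_1$ to $b_1$ while fixing $a_3$, yielding $B_1\overline{A_3}$, $C_1\overline{A_3}$, $B_2\overline{A_1}$, $C_2\overline{A_1}$, $A_2\overline{C_2}$, and then exhibits concrete words such as $(B_2\overline{A_1})(C_1\overline{A_1})(A_1\overline{A_2})(C_2\overline{A_1})$ and $(B_3\overline{A_1})(C_2\overline{A_1})(A_3\overline{A_1})(B_3\overline{A_1})$ which carry $b_2$ to $d_1$ and $d_1$ to $d_2$ respectively, giving $D_1\overline{A_1},D_2\overline{A_1}\in G$ by conjugation. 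Your proposal replaces all of this with the phrase ``conjugate the lantern relation and its handle-shifted and rotated copies,'' which is not a construction (one conjugates elements, not relations, and at that stage you have no element of $G$ known to move any curve onto $d_1$ or $d_2$). Without these explicit conjugators the regrouping step, and hence the extraction of a single twist, does not go through.

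Two smaller points. First, your claim that conjugating $C_1^{2}\overline{C_2^{2}}$ by powers of $h_{1,2}$ yields $C_{i-1}^{2}\overline{C_i^{2}}$ for \emph{all} $i\ge 1$ fails at $i=1$: by the definition of the handle shift, $\overline{h_{1,2}}(c_1^{2})=c_0^{1}$, not $c_0^{2}$, so the $i=1$ conjugate is a mixed-end element. More importantly, the statement includes the twists $C_0^{j}$ (the case $i=1$ of $C_{i-1}^{j}$), and your proposal never produces them; the paper obtains $C_0^{1}=(C_1^{2})^{\overline{h_{1,2}}}$ at the very end and then applies $R$. Second, your plan to extract a single $B$-twist ``from the interior $d$-curves'' is more roundabout than necessary: once $A_3$ and the mixed difference $B_1\overline{A_3}$ are in $G$, one simply has $B_1=(B_1\overline{A_3})A_3$, which is how the paper concludes.
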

\begin{proof}
Let $G$ be the subgroup generated by the elements
\[
\lbrace \rho_1, \rho_2, \rho_1, \rho_2, A_1^{2}\overline{A_2^{2}},B_1^{2}\overline{B_2^{2}}, C_1^{2}\overline{C_2^{2}}, h_{1,2} \rbrace.
\]
Let us now denote the curves $a_i^{2}$, $b_i^{2}$ and $c_i^{2}$ by $a_i$, $b_i$ and $c_i$, and also corresponding Dehn twists by $A_i$, $B_i$ and $C_i$, respectively, so that the subgroup $G$ contains the elements $A_1\overline{A_2}$, $B_1\overline{B_2}$ and $C_1\overline{C_2}$.

Since $h_{1,2}(a_1,a_2)=(a_2,a_3)$, the element $A_2\overline{A_3}$ is in the subgroup $G$. This also implies that
\[
A_1\overline{A_3}=(A_1\overline{A_2})(A_2\overline{A_3})\in G.
\]
It can be verified that
\[
(A_1\overline{A_2})(B_1\overline{B_2})(a_1,a_3)=(b_1,a_3)
\]
so we have $B_1\overline{A_3}\in G$ and 
since  
\[
(B_1\overline{B_2})(C_1\overline{C_2})(b_1,a_3)=(c_1,a_3)
\]
we also have $C_1\overline{A_3}\in G$.
Moreover, we have the following elements
\begin{eqnarray*}
B_2\overline{A_1}&=&(B_2\overline{B_1})(B_1\overline{A_3})(A_3\overline{A_2})(A_2\overline{A_1}),\\
C_1\overline{A_1}&=&(C_1\overline{A_3})(A_3\overline{A_2})(A_2\overline{A_1}),\\
C_2\overline{A_1}&=&(C_2\overline{C_1})(C_1\overline{A_1}) \textrm{ and }\\
A_2\overline{C_2}&=&(A_2\overline{A_1})(A_1\overline{C_2}),
\end{eqnarray*}
which are all contained in the subgroup $G$.
It can be checked that
\[
(B_2\overline{A_1})(C_1\overline{A_1})(A_1\overline{A_2})(C_2\overline{A_1})(b_2,a_1)=(d_1,a_1),
\]
where the curve $d_1$ is as in Figure~\ref{lantern}. Since each factor on the left hand side and $B_2\overline{A_1}$ are contained in $G$, we get $D_1\overline{A_1}\in G$.
Since $h_{1,2}(b_1,b_2)=(b_2,b_3)$, the element $B_2\overline{B_3}\in G$. Hence,  we have the element
\[
B_3\overline{A_1}=(B_3\overline{B_2})(B_2\overline{A_1})\in G.
\]
It can also be verified that
\[
(B_3\overline{A_1})(C_2\overline{A_1})(A_3\overline{A_1})(B_3\overline{A_1})(d_1,a_1)=(d_2,a_1),
\]
where the curve $d_1$ is as in Figure~\ref{lantern}. Again, since $G$ contains each factors and $D_1\overline{A_1}$, it also contains  $D_2\overline{A_1}$. This implies that
\[
D_2\overline{C_1}=(D_2\overline{A_1})(A_1\overline{C_1})\in G.
\]
Now, using the following lantern relation (see Figure~\ref{lantern}), 
\[
A_1C_1C_2A_3=A_2D_1D_2
\]
we obtain
\[
A_3=(A_2\overline{C_2})(D_1\overline{A_1})(D_2\overline{C_1})\in G,
\]
since each factor is contained in $G$. Thus using the actions of the elements $R=\rho_1\rho_2$ and $h_{1,2}$, we get all Dehn twists $A_i^{j}\in G$ for all $j=1,2,\ldots,n$.
Moreover, the subgroup $G$ contains
\begin{eqnarray*}
C_1&=&(C_1\overline{A_1})A_1\in G \textrm{ and }\\
B_1&=&(B_1\overline{A_3})A_3\in G.
\end{eqnarray*}
By conjugating these elements with $R=\rho_1\rho_2$ and $h_{1,2}$, we conclude that $B_i^{j}$ and $C_i^{j}$ are all contained in the subgroup $G$ for all $j=1,2,\ldots,n$. Also it follows from  $\overline{h_{1,2}}(c_1^{2})=C_0^{1}$ that we get 
\[
C_0^{1}=(C_1^{2})^{\overline{h_{1,2}}}\in G,
\]
which implies $C_0^{j}$ is contained in $G$ for each $j=1,2,\ldots,n$ by the action of $R$. This finishes the proof.
\end{proof}
Using lemmata~\ref{lem33} and~\ref{lem44}, we immediately conclude the following theorem.
\begin{theorem}\label{lemthm}
For $n\geq3$, the group topologically generated by the elements
\[
\lbrace \rho_1, \rho_2, A_1^{1}\overline{A_1^{2}},B_1^{1}\overline{B_1^{2}}, C_0^{1}\overline{C_0^{2}}, h_{1,2}\rbrace
\]
%contains the Dehn twists $A_i^{j}$, $B_i^{j}$, $C_i^{j}$ and $C_0^{j}$ for all %$j=1,2,\ldots,n$.
contains the Dehn twists $A_i^{j}$, $B_i^{j}$, $C_{i-1}^{j}$ for all $j=1,2,\ldots, n$ and for all $i=1, 2, 3, \ldots$ 
\end{theorem}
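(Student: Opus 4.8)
The plan is simply to chain Lemma~\ref{lem33} and Lemma~\ref{lem44} together, since the two lemmas have been engineered so that the conclusion of the first supplies precisely the generating set hypothesized in the second. First I would let $G$ denote the group topologically generated by the set $\lbrace \rho_1, \rho_2, A_1^{1}\overline{A_1^{2}},B_1^{1}\overline{B_1^{2}}, C_0^{1}\overline{C_0^{2}}, h_{1,2}\rbrace$, noting that this is exactly the generating set appearing in Lemma~\ref{lem33}.

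By Lemma~\ref{lem33}, the group $G$ contains the three Dehn twist products $A_1^{2}\overline{A_2^{2}}$, $B_1^{2}\overline{B_2^{2}}$, and $C_1^{2}\overline{C_2^{2}}$. Moreover, $G$ trivially contains $\rho_1$, $\rho_2$, and $h_{1,2}$, as these already lie in the defining generating set. Consequently $G$ contains the entire set $\lbrace \rho_1, \rho_2, A_1^{2}\overline{A_2^{2}},B_1^{2}\overline{B_2^{2}}, C_1^{2}\overline{C_2^{2}}, h_{1,2}\rbrace$, which is precisely the hypothesis of Lemma~\ref{lem44}.

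The one point deserving a word of care is the topological nature of the generation. Since $G$ is a closed subgroup of $\mod(S(n))$ containing this second set, it must contain the closure of the subgroup algebraically generated by that set; in other words, the group topologically generated by the set of Lemma~\ref{lem44} is a closed subgroup of $G$. Applying Lemma~\ref{lem44} to this subgroup then shows that it, and hence $G$, contains all the Dehn twists $A_i^{j}$, $B_i^{j}$, and $C_{i-1}^{j}$ for every $j=1,2,\ldots,n$ and every $i=1,2,3,\ldots$, which is exactly the assertion of the theorem. I do not anticipate any genuine obstacle: the substantive work—the lantern-relation computation together with the rotation and handle-shift bookkeeping—has already been carried out inside the two lemmas, so the theorem follows by a purely formal composition of their statements.
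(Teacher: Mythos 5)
Your proposal is correct and matches the paper exactly: the paper derives Theorem~\ref{lemthm} immediately by composing Lemma~\ref{lem33} with Lemma~\ref{lem44}, just as you do. Your extra remark about closures and topological generation is a sensible (if unstated in the paper) point of care, but otherwise there is nothing to add.
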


\begin{figure}[hbt!]
\begin{center}
\scalebox{0.45}{\includegraphics{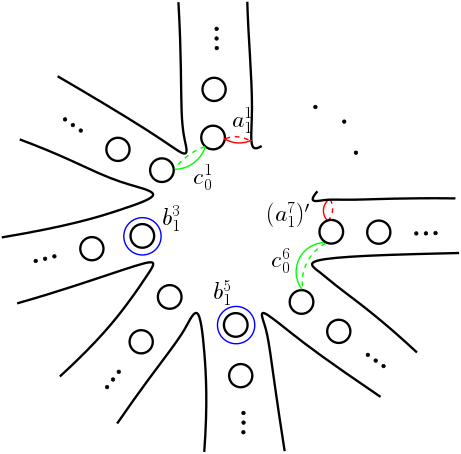}}
\caption{The curves $a_1^{1}, c_0^{1}, b_1^{3}, b_1^{5}, c_0^{6},a_1^{7'}$ on $S(n)$ for $n\geq 7$.}
\label{G7}
\end{center}
\end{figure}
\begin{lemma}\label{lem4} For $n\geq 7$, the group topologically generated by the elements
\[
\lbrace  \rho_1, \rho_2, A_1^{1}C_0^{1}B_1^{3}\overline{B_1^{5}}\overline{C_0^{6}}\overline{A_1^{7'}}, h_{1,2}\rbrace
\]
contains the Dehn twists $A_i^{j}$, $B_i^{j}$, $C_i^{j}$ and $C_0^{j}$ for all $j=1,2,\ldots,n$.
\end{lemma}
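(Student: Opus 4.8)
The plan is to reduce everything to Theorem~\ref{lemthm}. Write $G$ for the group topologically generated by $\{\rho_1,\rho_2,X,h_{1,2}\}$, where $X=A_1^{1}C_0^{1}B_1^{3}\overline{B_1^{5}}\,\overline{C_0^{6}}\,\overline{A_1^{7'}}$. Since $\rho_1,\rho_2,h_{1,2}\in G$ already, Theorem~\ref{lemthm} shows that $G$ contains all of $A_i^{j},B_i^{j},C_{i-1}^{j}$ --- and hence all the twists claimed in the statement, because $C_{i-1}^{j}$ for $i\ge 1$ is exactly $C_0^{j},C_1^{j},\dots$ --- as soon as $G$ contains the three adjacent differences $A_1^{1}\overline{A_1^{2}}$, $B_1^{1}\overline{B_1^{2}}$ and $C_0^{1}\overline{C_0^{2}}$. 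Thus the entire task is to extract these three elements from the single twist-product $X$ using the symmetries already present in $G$.

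Next I would record how the available symmetries act. The six curves in Figure~\ref{G7} are pairwise disjoint, so the six factors of $X$ commute and may be reordered at will. Conjugation by $R=\rho_1\rho_2$ advances every end-superscript by one modulo $n$ (recall $R(\alpha^i)=\alpha^{i+1}$); conjugation by the reflections $\rho_1,\rho_2$ realizes the dihedral action on the $n$ ends and interchanges primed and unprimed curves --- in particular $\rho_1(a_1^{7'})=a_1^{2}$, which is the device that removes the single prime occurring in $X$; and conjugation by $h_{1,2}$ shifts subscripts on ends $1$ and $2$ and converts connecting curves as recorded in the proof of Lemma~\ref{lem44} (for instance $\overline{h_{1,2}}(c_1^{2})=c_0^{1}$). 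Together these three mechanisms let me move a twist of prescribed type to a prescribed end and subscript.

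The heart of the argument is a telescoping cancellation. Observe that $X$ contains exactly one positive and one negative twist of each type $a,b,c$, that the two $b$-twists and the two $c$-twists are unprimed, and that only the negative $a$-twist is primed. Because $n\ge 7$, I can form products of the rotated conjugates $R^{k}X\overline{R^{k}}$ in which the $b$- and $c$-factors occur in canceling $\pm$ pairs and disappear, so that, after using $\rho_1,\rho_2$ to unprime $A_1^{7'}$, only $a$-twists survive; aligning their subscripts with $h_{1,2}$ then collapses the surviving product to the single difference $A_1^{1}\overline{A_1^{2}}$. The same scheme, cancelling instead the $a$- and $c$-factors (respectively the $a$- and $b$-factors), produces $B_1^{1}\overline{B_1^{2}}$ and $C_0^{1}\overline{C_0^{2}}$. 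Invoking Theorem~\ref{lemthm} then finishes the proof.

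The main obstacle is the bookkeeping of these cancellations rather than any single conceptual point. One must (i) order every product so that only twists about genuinely disjoint curves are commuted past one another --- crucially never interchanging $A_1^{i}$ and $B_1^{i}$ on the same handle; (ii) track the cyclic wrap-around at end $n$ when shifting by $R$; and (iii) keep careful account of when the reflections introduce or remove primes. It is precisely the hypothesis $n\ge 7$ that provides enough ends to realize the six curves of $X$ disjointly, with the end-gaps ($6$ for the $a$-pair, $5$ for the $c$-pair, $2$ for the $b$-pair) that make the three twist types separable by these rotational cancellations.
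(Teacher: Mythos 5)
Your reduction to Theorem~\ref{lemthm} is exactly the paper's first step, and your inventory of the available symmetries ($R$ advancing superscripts, $\rho_1$ removing the prime from $a_1^{7'}$, $h_{1,2}$ shifting subscripts) is correct. But the ``heart of the argument'' --- the telescoping cancellation --- is a genuine gap, for two reasons. First, the arithmetic does not work: in $X$ the positive and negative $b$-factors sit at ends differing by $2$, the $c$-factors at ends differing by $5$, and the $a$-factors at ends differing by $6$. For a product of conjugates $R^{k}X\overline{R^{k}}$, $k$ ranging over a multiset $K\subset\Z/n$, to cancel all $b$-factors you need $K=K+2$, and to cancel all $c$-factors you need $K=K+5$; since $\gcd(2,5)=1$ this forces $K=K+1$, i.e.\ $K$ is a union of full $R$-orbits, and then $K=K+6$ as well, so the $a$-factors cancel too. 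You cannot kill exactly two of the three types by rotational conjugation, and you give no concrete alternative recipe using $\rho_1,\rho_2,h_{1,2}$ that avoids this obstruction. Second, the premise that ``the six factors of $X$ commute and may be reordered at will'' is unjustified and is contradicted both by your own caveat (i) and by the paper's computations: for instance the paper uses that $F_2F_1$ sends $c_0^{2}$ to $b_1^{3}$, which is impossible if $c_0^{2}$ were disjoint from all the curves supporting $F_1$ and $F_2$. Without free reordering, a $+/-$ pair of like factors separated by non-commuting factors does not cancel, so even the bookkeeping you defer to is not merely tedious but ill-founded.

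The paper's actual mechanism is different and worth internalizing: it never cancels commuting factors across rotated copies. Instead it conjugates one six-fold product by another (e.g.\ $F_3=F_2^{F_2F_1}$), using that the conjugating element maps the support tuple of $F_2$ to a tuple differing in only one or two curves; iterating this produces two products $F_4,F_5$ that differ in a \emph{single} factor, whence $F_5\overline{F_4}=A_2\overline{B_2}$ is a difference of twists of \emph{different} types at the \emph{same} end. A second round yields $B_6\overline{C_6}$, and only then are $\rho_1$, $R$ and concatenation used to assemble $A_1^{1}\overline{A_1^{2}}$, $B_1^{1}\overline{B_1^{2}}$ and $C_0^{1}\overline{C_0^{2}}$. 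To repair your proof you would need to replace the telescoping step by an argument of this kind (or some other explicit derivation of the three target elements).
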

\begin{proof}
Let $F_1$ denote the element $A_1^{1}C_0^{1}B_1^{3}\overline{B_1^{5}}\overline{C_0^{6}}\overline{A_1^{7'}}$ and $H$ be the subgroup topologically generated by the elements $\lbrace \rho_1, \rho_2, F_1, h_{1,2}\rbrace$. Note that the rotation $R=\rho_1\rho_2$ belongs to the subgroup $H$. By Theorem~\ref{lemthm}, it suffices to prove that the elements $A_1^{1}\overline{A_1^{2}}$, $B_1^{1}\overline{B_1^{2}}$ and $C_0^{1}\overline{C_0^{2}}$ are contained in $H$.

For simplicity, let us denote the Dehn twists $A_1^{i}$,  $A_1^{i'}$,  $B_1^{i}$,  $C_0^{i}$ by $A_i$,  $A_{i}'$,  $B_{i}$,  $C_{i}$, respectively.

\noindent
Let $F_2$ be the element obtained by conjugation of $F_1$ with $R$. Hence
\begin{eqnarray*}
F_2&=&F_1^{R}=A_{2}C_{2}B_{4}\overline{B_{6}}\overline{C_{7}}\overline{A_{8}'} \in H\textrm{ if } n>7\\
(F_2&=&F_1^{R}=A_{2}C_{2}B_{4}\overline{B_{6}}\overline{C_{7}}\overline{A_{1}'} \in H\textrm{ if } n=7).\\
\end{eqnarray*}
It follows from 
\begin{eqnarray*}
F_2F_1(a_2,c_2,b_4,b_6,c_7,a_8')&=&(a_2,b_3,b_4,c_6,c_7,a_8') \textrm{ if } n>7\\
(F_2F_1(a_2,c_2,b_4,b_6,c_7,a_1')&=&(a_2,b_3,b_4,c_6,c_7,a_1') \textrm{ if } n=7)\\
\end{eqnarray*}
that
\begin{eqnarray*}
F_3&=&F_2^{F_2F_1}=A_2B_3B_4\overline{C_6}\overline{C_7}\overline{A_8'}\in H\textrm{ if } n>7\\
(F_3&=&F_2^{F_2F_1}=A_2B_3B_4\overline{C_6}\overline{C_7}\overline{A_1'}\in H\textrm{ if } n=7).
\end{eqnarray*}
The subgroup $H$ contains the element
\[
F_4=F_3^{\overline{R}}=A_1B_2B_3\overline{C_5}\overline{C_6}\overline{A_7'}.
\]
Since $F_4F_3(a_1,b_2,b_3,c_5,c_6,a_7')=(a_1,a_2,b_3,c_5,c_6,a_7')$, we get the following element:
\[
F_5=F_4^{F_4F_3}=A_1A_2B_3\overline{C_5}\overline{C_6}\overline{A_7'}\in H.
\]
Thus, the element $F_5\overline{F_4}=A_2\overline{B_2}\in H$, which implies that $A_1\overline{B_1}=(A_2\overline{B_2})^{\overline{R}}\in H$ and $A_3\overline{B_3}=(A_2\overline{B_2})^{R}\in H$.
The elements
\begin{eqnarray*}
F_6&=&(A_3\overline{B_3})F_1=(A_3\overline{B_3})(A_1C_1B_3\overline{B_5}\overline{C_6}\overline{A_7'})=(A_1C_1A_3\overline{B_5}\overline{C_6}\overline{A_7'})\in H \textit{ and }\\
F_7&=&F_6^{R}=A_2C_2A_4\overline{B_6}\overline{C_7}\overline{A_8'} \in H\textit{ if } n>7\\
(F_7&=&F_6^{R}=A_2C_2A_4\overline{B_6}\overline{C_7}\overline{A_1'} \in H \textit{ if } n=7).
\end{eqnarray*}
Since 
\begin{eqnarray*}
F_7F_6(a_2,c_2,a_4,b_6,c_7,a_8')=(a_2,c_2,a_4,c_6,c_7,a_8')\textit{ if } n>7\\
(F_7F_6(a_2,c_2,a_4,b_6,c_7,a_1')=(a_2,c_2,a_4,c_6,c_7,a_1')\textit{ if } n=7),\\
\end{eqnarray*}
we obtain the element
\begin{eqnarray*}
F_8&=&F_7^{F_7F_6}=A_2C_2A_4\overline{C_6}\overline{C_7}\overline{A_8'}\in H\textit{ if } n>7\\
(F_8&=&F_7^{F_7F_6}=A_2C_2A_4\overline{C_6}\overline{C_7}\overline{A_1'}\in H\textit{ if } n=7).\\
\end{eqnarray*}
Then we have $\overline{F_7}F_8=B_6\overline{C_6}\in H$, which leads to $B_i\overline{C_i}\in H$ for $i=1,2,\ldots,n$ by the action of $R$.
Moreover, it follows from $\rho_1(b_1,c_1)=(b_1,c_n)$ and 
$R(b_1,c_n)=(b_2,c_1)$ that we get $B_1\overline{C_n}\in H$ and $B_2\overline{C_1}\in H$.
We conclude that the subgroup $H$ contains the following elements:
\begin{eqnarray*}
C_1\overline{C_2}&=&(C_1\overline{B_2})(B_2\overline{C_2}),\\
B_1\overline{B_2}&=&(B_1\overline{C_1})(C_1\overline{B_2}) \textrm{ and}\\
A_1\overline{A_2}&=&(A_1\overline{B_1})(B_1\overline{B_2})(B_2\overline{A_2}),
\end{eqnarray*}
which completes the proof.
\end{proof}

\begin{figure}[hbt!]
\begin{center}
\scalebox{0.35}{\includegraphics{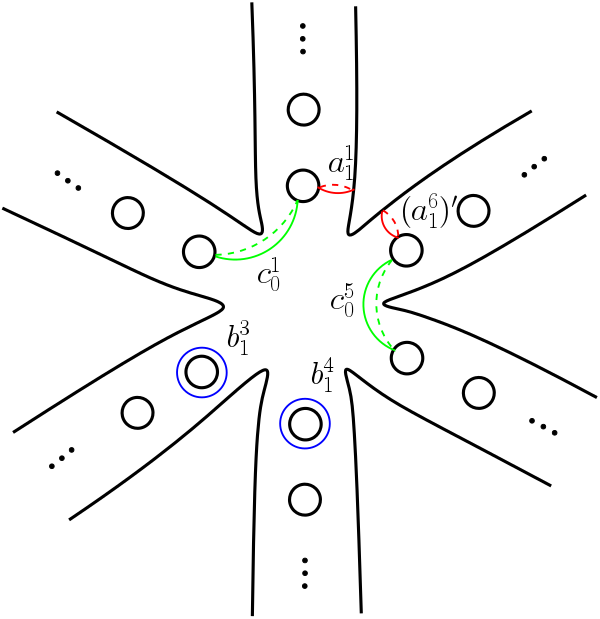}}
\caption{The curves $a_1^{1}, c_0^{1}, b_1^{3}, b_1^{4}, c_0^{5},a_1^{6'}$ on $S(6)$.}
\label{G6}
\end{center}
\end{figure}
\begin{lemma}\label{lem5} For $n=6$, the group topologically generated by the elements
\[
\lbrace  \rho_1, \rho_2, A_1^{1}C_0^{1}B_1^{3}\overline{B_1^{4}}\overline{C_0^{5}}\overline{A_1^{6'}}, h_{1,2}\rbrace
\]
contains the Dehn twists $A_i^{j}$, $B_i^{j}$, $C_i^{j}$ and $C_0^{j}$ for all $j=1,2,\ldots,n$.
\end{lemma}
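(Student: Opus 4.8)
The plan is to run the proof of Lemma~\ref{lem4} again, keeping the same architecture but recomputing every index for the configuration that has now been compressed into six ends. Write $F_1 = A_1^{1}C_0^{1}B_1^{3}\overline{B_1^{4}}\overline{C_0^{5}}\overline{A_1^{6'}}$ and let $H$ be the group topologically generated by $\{\rho_1,\rho_2,F_1,h_{1,2}\}$, so that $R=\rho_1\rho_2\in H$. By Theorem~\ref{lemthm} it is enough to produce the three elements $A_1^{1}\overline{A_1^{2}}$, $B_1^{1}\overline{B_1^{2}}$ and $C_0^{1}\overline{C_0^{2}}$ inside $H$, since Theorem~\ref{lemthm} then delivers all the Dehn twists $A_i^{j}$, $B_i^{j}$ and $C_{i-1}^{j}$, which is exactly the list in the statement. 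As in Lemma~\ref{lem4} I would abbreviate $A_i=A_1^{i}$, $A_i'=A_1^{i'}$, $B_i=B_1^{i}$ and $C_i=C_0^{i}$, so that $F_1=A_1C_1B_3\overline{B_4}\overline{C_5}\overline{A_6'}$.

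The mechanism is a telescoping cascade of conjugations. First I would form $F_2=F_1^{R}=A_2C_2B_4\overline{B_5}\overline{C_6}\overline{A_1'}$, where the last factor reflects that $R$ wraps end $6$ back to end $1$. I would then compute the action of $F_2F_1$ on the six support curves of $F_2$; by analogy with Lemma~\ref{lem4} I expect exactly one $c$-curve to be sent to a $b$-curve and one $b$-curve to a $c$-curve with the rest fixed, yielding $F_3=F_2^{F_2F_1}=A_2B_3B_4\overline{C_5}\overline{C_6}\overline{A_1'}$. Conjugating by $\overline R$ gives $F_4=A_1B_2B_3\overline{C_4}\overline{C_5}\overline{A_6'}$, and a second tracking computation for $F_4F_3$ (expected to send $b_2\mapsto a_2$, fixing the others) produces $F_5=A_1A_2B_3\overline{C_4}\overline{C_5}\overline{A_6'}$. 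The product $F_5\overline{F_4}$ then collapses, after cancellation of the common tail and using that $A_1$ commutes with the sector-$2$ twists, to the single difference twist $A_2\overline{B_2}\in H$, whence $A_1\overline{B_1}=(A_2\overline{B_2})^{\overline R}$ and $A_3\overline{B_3}=(A_2\overline{B_2})^{R}$ lie in $H$.

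Feeding $A_3\overline{B_3}$ back into $F_1$ gives $F_6=(A_3\overline{B_3})F_1=A_1C_1A_3\overline{B_4}\overline{C_5}\overline{A_6'}$, its conjugate $F_7=F_6^{R}=A_2C_2A_4\overline{B_5}\overline{C_6}\overline{A_1'}$, and, after a third curve-tracking step sending $b_5\mapsto c_5$, the element $F_8=A_2C_2A_4\overline{C_5}\overline{C_6}\overline{A_1'}$. Then $\overline{F_7}F_8=B_5\overline{C_5}\in H$, which spreads to $B_i\overline{C_i}\in H$ for every $i$ by conjugating with $R$. The two wrap-around differences come from $\rho_1(b_1,c_1)=(b_1,c_6)$ and $R(b_1,c_6)=(b_2,c_1)$, giving $B_1\overline{C_6}$ and $B_2\overline{C_1}$ in $H$. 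Finally the three target elements assemble exactly as in Lemma~\ref{lem4}:
\[
C_1\overline{C_2}=(C_1\overline{B_2})(B_2\overline{C_2}),\quad B_1\overline{B_2}=(B_1\overline{C_1})(C_1\overline{B_2}),\quad A_1\overline{A_2}=(A_1\overline{B_1})(B_1\overline{B_2})(B_2\overline{A_2}),
\]
which are $C_0^{1}\overline{C_0^{2}}$, $B_1^{1}\overline{B_1^{2}}$ and $A_1^{1}\overline{A_1^{2}}$, completing the reduction.

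I expect the main obstacle to be certifying the three curve-tracking identities (the computations of $F_2F_1$, $F_4F_3$ and $F_7F_6$ on the relevant curves) from the picture in Figure~\ref{G6}. In the $n\ge 7$ case of Lemma~\ref{lem4} the twisted curves sit in the well-separated ends $3,5,6,7$, so the telescoping cancellations are transparent; here they are packed into the consecutive ends $3,4,5,6$ and $R$ identifies end $6$ with end $1$, so one must check that the tighter spacing introduces no extra geometric intersection among the supports of $F_1$ and its conjugates that would spoil the pattern of fixed-and-exchanged curves. Since curves in distinct ends remain disjoint and the intersection pattern along each individual arm is unchanged, I expect the same cancellations to survive, but each of the three identities must be reverified directly against Figure~\ref{G6} rather than quoted from Lemma~\ref{lem4}.
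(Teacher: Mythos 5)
Your overall reduction (produce $A_1^{1}\overline{A_1^{2}}$, $B_1^{1}\overline{B_1^{2}}$, $C_0^{1}\overline{C_0^{2}}$ and invoke Theorem~\ref{lemthm}) matches the paper's, and your first step $F_2F_1(a_2,c_2,b_4,b_5,c_6,a_1')=(a_2,b_3,b_4,c_5,c_6,a_1')$, hence $F_3=A_2B_3B_4\overline{C_5}\overline{C_6}\overline{A_1'}$, agrees with the paper's $K_3$. But the cascade breaks exactly at the point you flagged, and the worry you then dismissed is fatal: the curve $c_0^{i}$ does not live in a single arm, it meets both $b_1^{i}$ and $b_1^{i+1}$ once (this is forced by the paper's own computations, e.g.\ $K_2K_1$ sending $b_1^{5}$ to $c_0^{5}$ requires $i(b_1^{5},c_0^{5})=1$). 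Now compare supports. For $n\geq 7$ the fourth slot of $F_4$ holds $c_0^{5}$, which is disjoint from every curve in the support of $F_3=A_2B_3B_4\overline{C_6}\overline{C_7}\overline{A_8'}$; for $n=6$ that slot holds $c_0^{4}$, and your $F_3$ contains the factor $B_4=B_1^{4}$ with $i(b_1^{4},c_0^{4})=1$. Hence $F_3(c_0^{4})=B_4(c_0^{4})\neq c_0^{4}$ while $F_4^{-1}(c_0^{4})=c_0^{4}$, so $F_4F_3$ does not fix $c_0^{4}$, $F_5=F_4^{F_4F_3}$ is not $A_1A_2B_3\overline{C_4}\overline{C_5}\overline{A_6'}$, and $F_5\overline{F_4}$ does not collapse to $A_2\overline{B_2}$. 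The same collision recurs at your third tracking step: $F_6$ contains $\overline{B_4}$, $F_7$ contains $A_4$, and $i(b_1^{4},a_1^{4})=1$, so $F_7F_6$ does not fix $a_1^{4}$ and $\overline{F_7}F_8\neq B_5\overline{C_5}$.

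This is precisely why the paper's proof of Lemma~\ref{lem5} abandons the Lemma~\ref{lem4} architecture after the construction of $K_3$: it forms the second conjugate the other way round ($K_4=K_1^{K_1K_2}$), moves supports apart with $R^{2}$ rather than $\overline{R}$, and first extracts $A_1\overline{A_{3}'}$ (then $B_3\overline{A_1}$, $B_3\overline{B_5}$, $B_3\overline{C_5}$, and so on) instead of $A_2\overline{B_2}$ and the differences $B_i\overline{C_i}$. A pure index shift of the $n\geq 7$ argument does not work; the intermediate conjugations have to be redesigned along the paper's lines so that, at each tracking step, the only cross-intersection between the two supports is the one you intend to exploit.
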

\begin{proof}
Let $K_1$ denote the element $ A_1^{1}C_0^{1}B_1^{3}\overline{B_1^{4}}\overline{C_0^{5}}\overline{A_1^{6'}}$. Let $K$ denote the subgroup topologically generated by the elements $\lbrace \rho_1, \rho_2, K_1, h_{1,2}\rbrace$. Note that the rotation $R=\rho_1\rho_2\in K$. By Theorem~\ref{lemthm}, it is enough to show that the subgroup $K$ contains the elements $A_1^{1}\overline{A_1^{2}}$, $B_1^{1}\overline{B_1^{2}}$ and $C_0^{1}\overline{C_0^{2}}$.

Again for simplicity, let us denote the Dehn twists $A_1^{i}$,  $A_1^{i'}$,  $B_1^{i}$,  $C_0^{i}$ by $A_i$,  $A_{i}'$,  $B_{i}$,  $C_{i}$, respectively.

\noindent
The subgroup $K$ contains the element
\[
K_2=K_1^{R}=A_2C_2B_4\overline{B_5}\overline{C_6}\overline{A_1'}.
\]
It can be verified that
\begin{eqnarray*}
K_2K_1(a_2,c_2,b_4,b_5,c_6,a_1')&=&
(a_2,b_3,b_4,c_5,c_6,a_1') \textrm{ and  }\\
K_1K_2(a_1,c_1,b_3,b_4,c_5,a_6')&=&(a_1,c_1,c_2,b_4,b_5,a_6').
\end{eqnarray*}
We then have the following elements:
\begin{eqnarray*}
K_3&=&K_2^{K_2K_1}=A_2B_3B_4\overline{C_5}\overline{C_6}\overline{A_1'}\in K\textrm{ and  }\\
K_4&=&K_1^{K_1K_2}=A_1C_1C_2\overline{B_4}\overline{B_5}\overline{A_6'}\in K.
\end{eqnarray*}
The following elements are also contained in $K$:
\begin{eqnarray*}
K_5&=&K_3^{R^2}=A_4B_5B_6\overline{C_1}\overline{C_2}\overline{A_3'}\\
K_6&=&\overline{K_5}=A_3'C_2C_1\overline{B_6}\overline{B_5}\overline{A_4}.
\end{eqnarray*}
It is easy to see that
\[
K_6K_4(a_3',c_2,c_1,b_6,b_5,a_4)=(a_3',c_2,c_1,a_6',b_5,b_4),
\]
which implies that
\[
K_7=K_6^{K_6K_4}=A_3'C_2C_1\overline{A_6'}\overline{B_5}\overline{B_4}\in K.
\]
From this, we get
\begin{eqnarray*}
K_8&=&K_4\overline{K_7}=A_1\overline{A_3'}\in K \textrm{ and }\\
K_9&=&\overline{K_8}=A_3'\overline{A_1}\in K 
\end{eqnarray*}
Since $K_9K_3(a_3',a_1)=(b_3,a_1)$, we obtain the element
\[
K_{10}=B_3\overline{A_1}\in K.
\]
Moreover, we have the following elements:
\begin{eqnarray*}
K_{11}&=&K_{10}^{R^2}=B_5\overline{A_3}\in K \textrm{ and }\\
K_{12}&=&\overline{K_{11}}=A_3\overline{B_5}\in K.
\end{eqnarray*}
It can be checked that 
\[
K_{12}K_{10}(a_3,b_5)=(b_3,b_5), \]
which implies that 
\[
K_{13}=B_3\overline{B_5}\in K.
\]
Similarly, it follows from
\[
K_{13}K_{3}(b_3,b_5)=(b_3,c_5)
\]
that we get
\[
K_{14}=B_3\overline{C_5}\in K.
\]
The subgroup $K$ also contains the element
\[
K_{15}=K_{14}^{\rho_1}=B_5\overline{C_2}
\]
since $\rho_1(b_3,c_5)=(b_5,c_2)$.
The following elements are also contained in $K$:
\begin{eqnarray*}
B_1\overline{C_4}&=&K_{15}^{R^2}=(B_5\overline{C_2})^{R^2} \textrm{ and }\\
C_4\overline{B_2}&=&\overline{K_{14}}^{\overline{R}}=(C_5\overline{B_3})^{\overline{R}}.
\end{eqnarray*}
This implies that
\[
B_1\overline{B_2}=(B_1\overline{C_4})(C_4\overline{B_2})\in K.
\]
Also, the subgroup $K$ contains the element
\[
A_1\overline{A_2}=\overline{K_{10}}(B_1\overline{B_2})^{R^2}K_{10}^{R}=(A_1\overline{B_3})(B_3\overline{B_4})(B_4\overline{A_2}).
\]
It remains to prove that $C_1\overline{C_2}\in K$. Consider the following elements:
\begin{eqnarray*}
B_2\overline{B_3}&=&(B_1\overline{B_2})^{R},\\
B_1\overline{B_3}&=&(B_1\overline{B_2})(B_2\overline{B_3}) \textrm{ and }\\
C_4\overline{C_5}&=&\overline{(B_1\overline{C_4})}(B_1\overline{B_3})K_{14}=(C_4\overline{B_1})(B_1\overline{B_3})(B_3\overline{C_5}),
\end{eqnarray*}
which are all contained in $K$. This finishes the proof since we get
\[
C_1\overline{C_2}=(C_4\overline{C_5})^{R^3}\in K.
\]
\end{proof}
\begin{figure}[hbt!]
\begin{center}
\scalebox{0.35}{\includegraphics{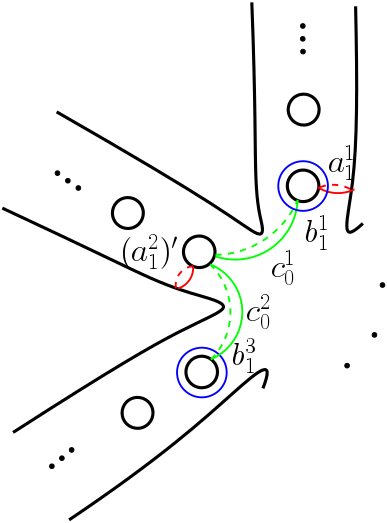}}
\caption{The curves $a_1^{1}, b_{1}^{1}, c_0^{1}, a_1^{2'}, c_0^{2}, b_1^{3}$ on $S(n)$ for $n\geq 3$}
\label{G3}
\end{center}
\end{figure}
\begin{lemma}\label{lem6} For $n\geq3$, the group topologically generated by the elements
\[
\lbrace  \rho_1, \rho_2, B_1^{1}C_0^{1}\overline{C_0^{2}}\overline{B_1^{3}}, A_1^{1}\overline{A_1^{2'}}, h_{1,2}\rbrace
\]
contains the Dehn twists $A_i^{j}$, $B_i^{j}$, $C_i^{j}$ and $C_0^{j}$ for all $j=1,2,\ldots,n$.
\end{lemma}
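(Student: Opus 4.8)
The plan is to reduce to Theorem~\ref{lemthm} and then extract, one at a time, the three relative twists that theorem takes as input. Let $G$ denote the group topologically generated by the listed elements. Writing $A_i,A_i',B_i,C_i$ for $A_1^i,A_1^{i'},B_1^i,C_0^i$ and noting that $R=\rho_1\rho_2\in G$, it suffices to show that $G$ contains $A_1^1\overline{A_1^2}$, $B_1^1\overline{B_1^2}$ and $C_0^1\overline{C_0^2}$; Theorem~\ref{lemthm} then produces every Dehn twist $A_i^{j},B_i^{j},C_i^{j}$ and $C_0^{j}$. First I would record the intersection data visible in Figure~\ref{G3}: the curves $b_1^1,c_0^1,c_0^2,b_1^3$ are pairwise disjoint, whereas $a_1^1$ meets $b_1^1$ transversely in a single point and is disjoint from $c_0^1,c_0^2,b_1^3$. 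In particular the generator $P_1=B_1^1C_0^1\overline{C_0^2}\,\overline{B_1^3}$ is a multitwist, and it factors as a product of two commuting pieces,
\[
P_1=(B_1^1\overline{B_1^3})(C_0^1\overline{C_0^2}).
\]

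Next I would manufacture $A_1^1\overline{A_1^2}$ from the generator $Q_1=A_1^1\overline{A_1^{2'}}$. The only obstruction is that $a_1^{2'}=h_{1,2}(a_1^1)$ is a \emph{primed} curve, and this is precisely the primed/unprimed bookkeeping already carried out in the proof of Lemma~\ref{lem33}. Conjugating $Q_1$ by $\rho_1$, $h_{1,2}$ and powers of $R$ produces a family of relative twists among the curves $a_1^{j}$ and $(a_1^{j})'$; telescoping these so that the primed curves cancel in pairs leaves $A_1^1\overline{A_1^2}\in G$.

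With $g:=A_1^1\overline{A_1^2}\in G$ in hand I would use it to break the multitwist $P_1$. Since $g$ is supported away from $c_0^1,c_0^2,b_1^3$ but moves $b_1^1$ (because $a_1^1$ meets $b_1^1$ once), the conjugate $gP_1\overline{g}$ agrees with $P_1$ except in its $B_1^1$-factor, and all remaining twists cancel by commutativity:
\[
P_1\,\overline{\left(gP_1\overline{g}\right)}=B_1^1\overline{B_{b'}}\in G,\qquad b'=t_{a_1^1}(b_1^1).
\]
This bootstraps one genuine relative twist in the $b$-direction. Now that $G$ contains an honest $b$-relative twist together with the $a$-relative twists and $h_{1,2}$, I can run the curve-chasing mechanism of Lemma~\ref{lem44}: applying products of the relative twists already in $G$ to suitable pairs of curves and conjugating, and using $h_{1,2}(b_1^1)=b_1^2$ to cross from sector $1$ to sector $2$, I would move the auxiliary curve $b'$ onto $b_1^2$ and thereby obtain $B_1^1\overline{B_1^2}\in G$. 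Conjugating by $R$ and multiplying gives $B_1^1\overline{B_1^3}=(B_1^1\overline{B_1^2})(B_1^2\overline{B_1^3})\in G$, and peeling this off the factorization above yields the last required element,
\[
C_0^1\overline{C_0^2}=\overline{\left(B_1^1\overline{B_1^3}\right)}\,P_1\in G.
\]
Theorem~\ref{lemthm} then finishes the proof.

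The main obstacle is the curve bookkeeping, which occurs twice. In the second paragraph one must push the primed curves around carefully enough that they cancel in pairs, exactly as in Lemma~\ref{lem33}. The more delicate point, however, is chasing the auxiliary curve $b'=t_{a_1^1}(b_1^1)$ all the way to $b_1^2$ using only relative twists---never an individual Dehn twist---which forces one to realise each braid move as a product of the relative elements already known to lie in $G$, in the style of Lemma~\ref{lem44}. Once these two curve-chases are verified against Figure~\ref{G3}, the surrounding algebra (the factorization of $P_1$ and the cancellations above) is routine.
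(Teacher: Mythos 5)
Your high-level strategy (reduce to Theorem~\ref{lemthm} by producing $A_1^1\overline{A_1^2}$, $B_1^1\overline{B_1^2}$ and $C_0^1\overline{C_0^2}$) is the same as the paper's, but the execution breaks down at its foundation. Your intersection data are wrong: the curves $b_1^1,c_0^1,c_0^2,b_1^3$ are \emph{not} pairwise disjoint. The paper's own computations force $i(b_1^1,c_0^1)=1$ (it uses $L_1(b_1^1)=c_0^1$, which is the braid move $t_bt_c(b)=c$) and, by the $R$-symmetry visible in Lemma~\ref{lem4} (where $F_2F_1$ carries $c_0^2$ to $b_1^3$), also $i(c_0^2,b_1^3)=1$. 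So $P_1=B_1^1C_0^1\overline{C_0^2}\,\overline{B_1^3}$ is a product of twists along a chain, not a multitwist, and your factorization $P_1=(B_1^1\overline{B_1^3})(C_0^1\overline{C_0^2})$ is false (it requires $\overline{B_1^3}$ to commute with $\overline{C_0^2}$). Consequently the final extraction $C_0^1\overline{C_0^2}=\overline{(B_1^1\overline{B_1^3})}P_1$ collapses: what that product actually equals is $C_0^1\overline{T_{t_{b_1^3}(c_0^2)}}$, not $C_0^1\overline{C_0^2}$.

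The second gap is your opening move: you assert that $A_1^1\overline{A_1^2}$ can be "telescoped" out of $A_1^1\overline{A_1^{2'}}$ using only $\rho_1,\rho_2,h_{1,2}$, citing Lemma~\ref{lem33} — but that lemma starts from the unprimed pair and goes the other way. The natural telescopes here ($Q_1^{\rho_1}=(A_1^1)'\overline{A_1^n}$, then conjugating by $h_{1,2}$ to get $A_1^2\overline{A_1^n}$, then $R$-conjugates) only produce relative twists $A_1^i\overline{A_1^k}$ with $i-k$ in the subgroup of $\Z/n\Z$ generated by $2$, which never contains $1$ when $n$ is even; further conjugation by $h_{1,2}$ escapes into the curves $a_j^2$ rather than closing the parity gap. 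The paper avoids this entirely: it obtains $A_1^1\overline{A_1^2}$ \emph{last}, and its route essentially requires the interaction of the two non-rotation generators via $L_2^{L_2L_1}=B_1^1\overline{A_1^{2'}}$ (again the move $t_at_b(a)=b$, exploiting $i(a_1^1,b_1^1)=1$), after which it builds a web of mixed relative twists $B_i\overline{A_j'}$, $C_i\overline{A_j'}$ and assembles $B_1\overline{B_2}$, then $C_1\overline{C_2}$, then $A_1\overline{A_2}$. Your third step (chasing $t_{a_1^1}(b_1^1)$ onto $b_1^2$ "in the style of Lemma~\ref{lem44}") is likewise asserted without a mechanism; the elements available at that stage do not obviously realize the needed change of coordinates. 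In short, the skeleton is right but each of the three relative twists is obtained by an argument that either rests on false disjointness or is not actually carried out.
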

\begin{proof}
Let $L_1$ and $L_2$ denote the elements $B_1^{1}C_0^{1}\overline{C_0^{2}}\overline{B_1^{3}}$ and $A_1^{1}\overline{A_1^{2'}}$, respectively. Let $L$ denote the subgroup topologically generated by the elements $\lbrace \rho_1, \rho_2, L_1, L_2, h_{1,2}\rbrace$. It is clear that the rotation $R=\rho_1\rho_2\in L$. It follows from Theorem~\ref{lemthm} that we need to show that the subgroup $L$ contains the elements $A_1^{1}\overline{A_1^{2}}$, $B_1^{1}\overline{B_1^{2}}$ and $C_0^{1}\overline{C_0^{2}}$.

Let us again denote the Dehn twists $A_1^{i}$,  $A_1^{i'}$,  $B_1^{i}$,  $C_0^{i}$ by $A_i$,  $A_{i}'$,  $B_{i}$,  $C_{i}$, respectively.
The subgroup $L$ contains the elements
\begin{eqnarray*}
L_3&=&\overline{L_1}=B_3C_2\overline{C_1}\overline{B_1} \  \textrm{and}\\
L_4&=&L_2^{R}=A_2\overline{A_3'}.
\end{eqnarray*}
It can be shown that $L_2L_1(a_1,a_2')=(b_1,a_2')$, which implies that
\[
L_5=L_2^{L_2L_1}=B_1\overline{A_2'} \in L.
\]
Also we get the following elements
\begin{eqnarray*}
L_6&=&\overline{L_4}=A_3'\overline{A_2}\in L \textrm{ and}\\
L_7&=&L_5^{R}=B_2\overline{A_3'} \in L.
\end{eqnarray*}
It is easy to see that $L_1(b_1,a_2')=(c_1,a_2')$. This implies that
\[
L_8=L_5^{L_1}=C_1\overline{A_2'} \in L.
\]
Also, the subgroup $L$ contains
\[
L_9=L_8^{R}=C_2\overline{A_3'}.
\]
Since $L_6L_3(a_3',a_2)=(b_3,a_2)$, we obtain
\[
L_{10}=L_6^{L_6L_3}=B_3\overline{A_2}\in L.
\]
Moreover, the subgroup $L$ contains the following elements:
\begin{eqnarray*}
L_{11}&=&L_{10}^{\overline{R}}=B_2\overline{A_1} \ \textrm{and}\\
L_{12}&=&L_{11}L_2=(B_2\overline{A_1})(A_1\overline{A_2'})=B_2\overline{A_2'}.
\end{eqnarray*}
From these we have
\[
B_1\overline{B_2}=L_5\overline{L_{12}}=(B_1\overline{A_2'})(A_2'\overline{B_2})\in L.
\]
We also obtain 
\[
L_{13}=\overline{L_5}(B_1\overline{B_2})L_7=(A_2'\overline{B_1})(B_1\overline{B_2})(B_2\overline{A_3'})=A_2'\overline{A_3'}\in L.
\]
Hence, we have
\[
C_1\overline{C_2}=L_8L_{13}\overline{L_9}=(C_1\overline{A_2'})(A_2'\overline{A_3'})(A_3'\overline{C_2})\in L.
\]
Finally, the element
\[
A_1\overline{A_2}=\overline{L_{11}}(B_1\overline{B_2})^{R}L_{11}^{R}=(A_1\overline{B_2})(B_2\overline{B_3})(B_3\overline{A_2})\in L.
\]
This completes the proof.
\end{proof}

%tulinaltunoz@baskent.edu.trtulinaltunoz@baskent.edu.t%rtulinaltunoz@baskent.edu.trtulinaltunoz@baskent.edu.tr

\subsection*{Involution generators}
Now, we present our involution generators of $\mod(S(n))$ for $n\geq 3$. We first express the handle shift $h_{1,2}$ as a product of two involutions (here we use the involutions that were introduced in~\cite{ah}). 

Consider the models of $S(n)$ which are obtained by decomposing $S(n)$ so that $S(n)=A\cup D_1 \cup D_2 \cup \cdots D_{n-2}$, where $A$ is an infinite surface with two ends accumulated by genus and $n-2$ boundary components and each $D_i$ is an infinite genus surface with one end accumulated by genus and one boundary component (see Figures~\ref{tau1} and \ref{tau2}). Note that the surface $S(n)$ is invariant under the two rotations $\tau_1$ and $\tau_2$, where they are rotations by $\pi$ about indicated lines shown in Figures~~\ref{tau1} and \ref{tau2}, respectively. Observe that $h_{1,2}=\tau_1\tau_2$.
\begin{figure}[hbt!]
\begin{center}
\scalebox{0.35}{\includegraphics{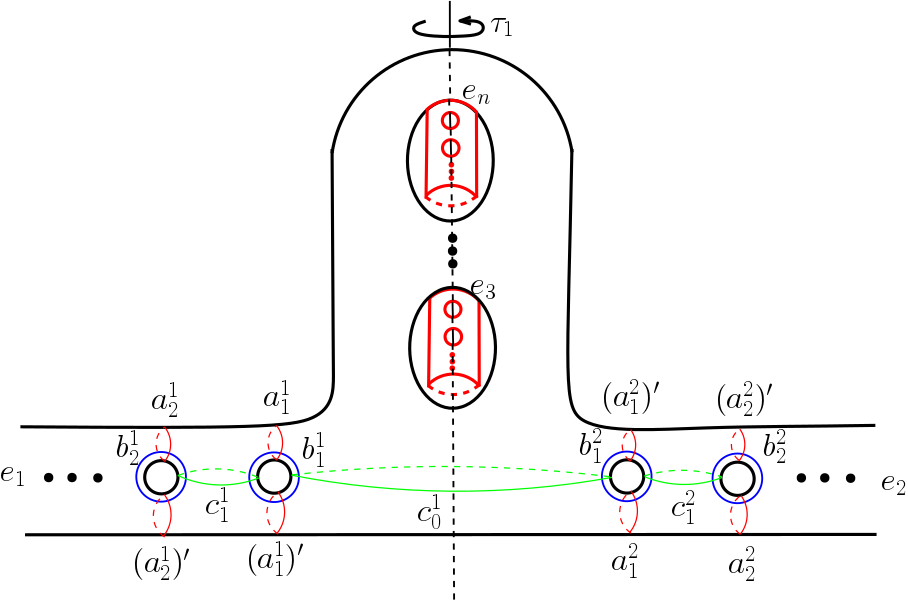}}
\caption{The involution $\tau_1$ on $S(n)$.}
\label{tau1}
\end{center}
\end{figure}
\begin{figure}[hbt!]
\begin{center}
\scalebox{0.28}{\includegraphics{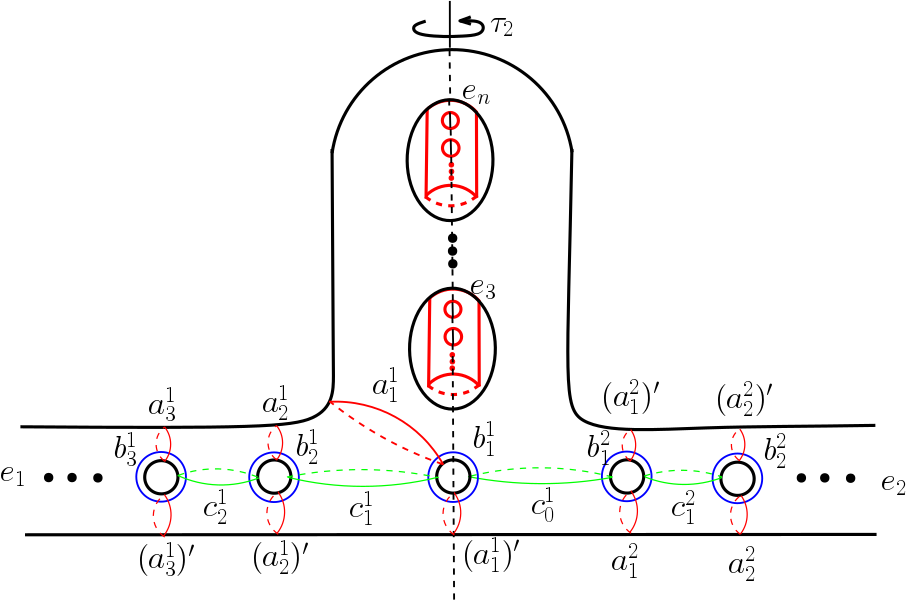}}
\caption{The involution $\tau_2$ on $S(n)$.}
\label{tau2}
\end{center}
\end{figure}

Consider now the surface $S(n)$ for $n\geq 7$ (see Figure~\ref{G7}). Since $\rho_3(a_1^{1})=(a_1^{7})'$, $\rho_3(c_0^{1})=c_0^{6}$ and $\rho_3(b_1^{3})=b_1^{5}$, where $R=\rho_1\rho_2$ and $\rho_3=R^3\rho_1R^{-3}$, we have 
\[
\rho_3A_1^{1}C_0^{1}B_1^{3}\overline{B_1^{5}}\overline{C_0^{6}}\overline{A_1^{7'}}
\]
is an involution. 

For the surface $S(6)$, it is easy to see that $\rho_2(a_1^{1})=(a_1^{6})'$, $\rho_2(c_0^{1})=c_0^{5}$ and $\rho_2(b_1^{3})=b_1^{4}$ (see Figure~\ref{G6}), which implies that 
\[
\rho_2A_1^{1}C_0^{1}B_1^{3}\overline{B_1^{4}}\overline{C_0^{5}}\overline{A_1^{6'}}
\]
is also an involution.

Let $\rho_4=R\rho_1R^{-1}$ and $\rho_5=R\rho_2R^{-1}$. For $n\geq3$, it can be observed that $\rho_4(b_1^{1})=b_1^{3}$, $\rho_4(c_0^{1})=c_0^{2}$, $\rho_5(a_1^{1})=(a_2^{2})'$ (see Figure~\ref{G3}). Hence, we get
\[
\rho_4B_1^{1}C_0^{1}\overline{C_0^{2}}\overline{B_1^{3}} \textrm{ and } \rho_5 A_1^{1}\overline{A_1^{2'}}
\]
are involutions.

The proof of the main theorem is now immediate: Let $\Gamma$ be the subgroup of $\mod(S(n))$ generated by the following involutions:
\begin{equation*}
 \begin{cases}
      \rho_1, \rho_2, \rho_3A_1^{1}C_0^{1}B_1^{3}\overline{B_1^{5}}\overline{C_0^{6}}\overline{A_1^{7'}}, \tau_1,\tau_2 & \text {if  $n\geq 7,$}\\
        \rho_1, \rho_2, \rho_2A_1^{1}C_0^{1}B_1^{3}\overline{B_1^{4}}\overline{C_0^{5}}\overline{A_1^{6'}}, \tau_1,\tau_2 &  \text{if $n= 6$},\\
                \rho_1, \rho_2, B_1^{1}C_0^{1}\overline{C_0^{2}}\overline{B_1^{3}}, A_1^{1}\overline{A_1^{2'}},  \tau_1,\tau_2 &  \text{if $n\geq 3$}.
    \end{cases}       
\end{equation*}
Note that $h_{1,2}=\tau_1\tau_2$ and $R=\rho_1\rho_1$ belong to $\Gamma$. It follows from lemmata~\ref{lem4}-\ref{lem6} that $\Gamma$ contains the Dehn twists $A_i^{j}$, $B_i^{j}$,$C_i^{j}$ and $C_0^{j}$ for all $j=1,2,\ldots,n$. Also, handle shifts are contained in $\Gamma$ by the conjugation of $R$. Hence, $\Pm(S(n))$ is contained in $\Gamma$ \cite[Theorem 4]{pv}. We finish the proof by showing that $\Gamma$ is mapped surjectively onto $\Sym_n$ by Lemma~\ref{lemma1}. The images of $R$ and $\tau_1$ are the $n$-cycle $(1,2,\ldots,n)$ and the $2$-cycle $(1,2)$, respectively. These elements generate $\Sym_n$, which completes the proof.

%%%%%%%%%%%%%%%%%%%%%%%%%%%%%%%%%%%%%%%%%%%%%%%%%%%%%%%%%%%%%%%%%


\begin{thebibliography}{xxxx}

%\bibitem{apy} T. Altun{\"{o}}z, M. Pamuk and O. Yildiz:
%\emph{Generating the twist subgroup by involutions},  to appear in the Journal of Topology and Analysis.



%\bibitem{bk} R. \.{I}. Baykur and M. Korkmaz:
%\emph{The mapping class group is generated by two commutators}, J. of Algebra {\bf{574}}, (2021), 278--291.

%\bibitem{bc} J. S. Birman and D. R. J. Chillingworth:  \emph{On the homeotopy group of a non-orientable surface}, Proc. Camb. Philos. Soc. {\bf{71}}, (1972), 437--448.

\bibitem{av} J. Aramayona, N. Vlamis: \emph{Big mapping class groups: an overview}, In the tradition of Thurston: geometry and topology. Springer, (2020), 459--496.

\bibitem{bf} T. E. Brendle, B. Farb:  \emph{Every mapping class group is generated by $6$ involutions}, J. of Algebra {\bf{278}}, (1) (2004), 187--198.

\bibitem{de} M. Dehn:  \emph{The group of mapping classes},  In: Papers on Group Theory and Topology. Springer-Verlag, 1987. Translated from the German by J. Stillwell (Die Gruppe der Abbildungsklassen, Acta Math {\bf{69}}, (1938), 135--206).

\bibitem{domat} G. Domat, F. Fanoni, S. Hensel:  \emph{Multitwists in Big Mapping Class Groups}, arXiv:2301.08780v1 math. GT 20 Jan 2023 

%\bibitem{du1} X. Du:
%\emph{The extended mapping class group can be generated by two torsions}, Journal of Knot Theory and Its Ramifications 
%{\bf{26}}, (11), (2017).

%\bibitem{du2} X. Du:
%\emph{The torsion generating set of the extended mapping class groups in low genus cases}, 
%Osaka Journal of Mathematics, in press

%\bibitem{FM} B. Farb and D. Margalit:
%\emph{A primer on mapping class groups}, Princeton University Press. {\bf{49}} (2011).

%\bibitem{G} S. Gervais:
%\emph{A finite presentation of the mapping class group of a punctured surface}, Topology. {\bf{40}}, (4) (2001), 703--725.

\bibitem{H} S. Humphries:  \emph{ Generators for the mapping class group}, In: Topology of LowDimensional Manifolds, Proc. Second Sussex Conf., Chelwood Gate, (1977), Lecture Notes in Math. {\bf{722}}, (2) (1979), Springer-Verlag, 44--47.


\bibitem{ah} A. Huynh: \emph{Torsion in big mapping class groups}, Doctoral dissertation, Rutgers University-School of Graduate Studies (2022

\bibitem{ka} M. Kassabov:
\emph{Generating mapping class groups by involutions}, ArXiv
math.GT/0311455, v1 25Nov2003.

%\bibitem{mk3} M. Korkmaz:
%\emph{First homology group of mapping class group of nonorientable surfaces}, Math. Proc. Camb. Phil. Soc. {\bf{123}}, (1998), 487--499.



%\bibitem{mk2} M. Korkmaz:
%\emph{Generating the surface mapping class group by two elements}, Trans. Amer. Math. Soc. {\bf{367}}, (8) (2005), 3299--3310.


\bibitem{mk1} M. Korkmaz:
\emph{Mapping class group is generated by three involutions},
Math. Res. Lett.  {\bf{27}}, (4) (2020), 1095--1108.

%\bibitem{l1} W. B. R. Lickorish:  \emph{Homeomorphisms of non-orientable two manifolds}, Proc. Cambridge Philos. Soc. {\bf{59}}, (2) (1963), 307--317.



%\bibitem{l2} W. B. R. Lickorish:  \emph{On the homeomorphisms %of a non-orientable surface}, Proc. Cambridge Philos. Soc. %{\bf{61}}, (1) (1965), 61--64.

%\bibitem{mpin} J. D. McCarthy, U. Pinkall:  \emph{Representing homology automorphisms of
%nonorientable surfaces},Preprint, Max-Planck Inst. (1985).



\bibitem{mk2} M. Korkmaz:
\emph{Generating the surface mapping class group by two elements}, Trans. Amer. Math. Soc. {\bf{367}}, (8) (2005), 3299--3310.

\bibitem{l3} W. B. R. Lickorish:  \emph{A finite set of generators for the homeotopy group of a $2$-manifold}, Proc. Cambridge Philos. Soc. {\bf{60}}, (4) (1964), 769--778.

\bibitem{luo} F. Luo:  \emph{ Torsion elements in the mapping class group of a surface}, ArXiv math.GT/0004048, v1 8 Apr 2000.

\bibitem{malestein-tao} J. Malestein and J. Tao
\emph{Self-Similar Surfaces: Involutions and Perfection}, Michigan Math. J. Advance Publication, (2023), 1--24.

\bibitem{mp} J. D. McCarthy, A. Papadopoulos:  \emph{ Involutions in surface mapping class
groups}, Enseign. Math. {\bf{33}}, (2) (1987), 275--290.

\bibitem{pv} P. Patel and N. G. Vlamis: \emph{Algebraic and topological properties of big mapping class groups},  Algebr. Geom. Topol.{\bf{18}}, (2018): 4109--4142.

\bibitem{ric} I. Richards: \emph{On the classification of noncompact surfaces}, Trans. Amer. Math. Soc. {\bf{106}}, (1963), 259–-269.




%\bibitem{sz3} B. Szepietowski:  \emph{Low-dimensional linear representations of the mapping class group of a nonorientable surface}, Algebr. Geom. Topol.
%{\bf{14}}, (4) (2014), 2445--2474.


\bibitem{w} B. Wajnryb:  \emph{Mapping class group of a surface is generated by two elements}, Topology
{\bf{35}}, (2) (1996), 377--383.




\bibitem{y1} O. Y{\i}ld{\i}z:
\emph{Generating mapping class group by two torsion elements}, Mediterr. J. Math. {\bf{19}}, (2) (2022), 59.



\bibitem{y2} O. Y{\i}ld{\i}z:
\emph{Generating mapping class group by three involutions}, ArXiv math.GT/2002.09151, 21Feb2020.















 
%%%%%%%%%%%%%%%%%%%%%%%%%%%%%%%%%%%%%%%






\end{thebibliography}
\end{document}